\theoremstyle{plain} 
\newtheorem{lemma}[equation]{Lemma} 
\newtheorem{proposition}[equation]{Proposition} 
\newtheorem{theorem}[equation]{Theorem} 
\newtheorem{conjecture}[equation]{Conjecture}
\newtheorem{priorResults}{Theorem}
\theoremstyle{definition}
\theoremstyle{remark}
\newtheorem*{ack}{Acknowledgment}
\numberwithin{equation}{section}
\title[] {Sparse Bounds for Bochner-Riesz Multipliers}
\author[M. T. Lacey] {Michael T. Lacey}   
\address{ School of Mathematics, Georgia Institute of Technology, Atlanta GA 30332, USA}
\email {lacey@math.gatech.edu}
\thanks{M.L.: Research supported in part by grant   National Science Foundation grant DMS-1600693, and by Australian Research Council grant DP160100153. All Authors:  This material is based upon work supported by the National Science Foundation under Grant No. DMS-1440140, while the authors were in residence at the Mathematical Sciences Research Institute in Berkeley, California, during the Spring 2017 Semester. }
 \author[D. Mena]{Dar\'io Mena Arias}   
\address{ School of Mathematics, Georgia Institute of Technology, Atlanta GA 30332, USA}
\email {dario.mena@math.gatech.edu}
\author[M. C. Reguera]{Maria Carmen Reguera}
\address{School of Mathematics, 
University of Birmingham,  
Edgbaston,  
Birmingham UK,  
B15 2TT}
\begin{document}

\begin{abstract}
The Bochner-Riesz multipliers $ B _{\delta }$ on $ \mathbb R ^{n}$ are shown to satisfy a range of sparse bounds,
for all $0< \delta < \frac {n-1}2 $.  
The range of sparse bounds increases to the optimal range, 
as $ \delta $ increases to the critical value,  $ \delta =\frac {n-1}2$, even assuming only partial information on the Bochner-Riesz conjecture in dimensions $ n \geq 3$.  In dimension $n=2$, we prove a sharp range of sparse bounds.  
The method of proof is based upon a `single scale' analysis, and yields the sharpest known weighted estimates for the Bochner-Riesz multipliers in the category of Muckenhoupt weights. 
\end{abstract}
	\maketitle  
\section{Introduction} 

We study \emph{sparse bounds} for the Bochner-Riesz multipliers in dimensions $ n \geq 2$. 
The latter are Fourier multipliers   $ B _{\delta } $  with symbol $ (1 - \lvert  \xi \rvert^2 )_+ ^{\delta }$, for $ \delta >0$.  That is, 
\begin{equation*}
\mathcal F  B _{\delta }f =  (1 - \lvert  \xi \rvert^2 )_+ ^{\delta } \mathcal F f , 
\end{equation*}
where $ \mathcal F$ is a choice of Fourier transform.  
At $ \delta_n = \frac{n-1}2$, the multiplier is borderline Calder\'on-Zygmund, and  one has the very sharp bounds of Conde-Alonso, Culiuc, di Plinio and Ou \cite{2016arXiv161209201C}, which we recall in Theorem~\ref{t:critical} below. 
In this paper, we focus on the super-critical range $ 0< \delta < \frac{n-1}2$, the study of which was initiated by 
Benea, Bernicot and Luque  \cite{160506401}.  
We supply sparse bound for all $ 0< \delta < \frac{n-1}2$, and  prove a sharp range of estimates in dimension $n=2$.  

\smallskip 
Sparse bounds are a particular quantification of the (weak) $ L ^{p}$-bounds for an operator, which in particular immediately imply weighted and vector-valued inequalities.  
The topic has been quite active, with an especially relevant paper being that of Benea, Bernicot and Luque 
\cite{160506401}, but also see \cites{160305317,MR3531367,2017arXiv170204569H,2016arXiv161209201C,161001531,2017arXiv170208594L} for more information about this topic.   
We set notation for the sparse bounds.  
Call a collection of cubes $ \mathcal S$  in $ \mathbb R ^{n}$ \emph{sparse} if there 
are sets $ \{ E_S  \,:\, S\in \mathcal S\}$  
which are pairwise disjoint,   $E_S\subset  S$ and satisfy $ \lvert  E_S\rvert > \tfrac 14 \lvert  S\rvert  $ for all $ S\in \mathcal S$.
For any cube $ Q$ and $ 1\leq r < \infty $, set $ \langle f \rangle_ {Q,r} ^{r} = \lvert  Q\rvert ^{-1} \int _{Q} \lvert  f\rvert ^{r} dx  $.  Then the $ (r,s)$-sparse form $ \Lambda _{\mathcal S, r,s} = \Lambda _{r,s} $, indexed by the sparse collection $ \mathcal S$ is 
\begin{equation} \label{e:sparse_def}
\Lambda _{S, r, s} (f,g) = \sum_{S\in \mathcal S} \lvert  S\rvert \langle f  \rangle _{S,r} \langle g \rangle _{S,s}.  
\end{equation}

Given a  sublinear operator $ T$, and $ 1\leq r, s < \infty$, we set 
$ \lVert T \,:\, (r,s)\rVert$ to be the infimum over constants $ C$ so that for all bounded compactly supported functions $ f, g$, 
\begin{equation}\label{e:SF}
\lvert  \langle T f, g \rangle \rvert \leq C \sup  \Lambda _{r,s} (f,g), 
\end{equation}
where the supremum is over all sparse forms.  
It is essential that the sparse form be allowed to depend upon $ f $ and $ g$. But the point is that the sparse form itself varies over a class of operators with very nice properties.

The study of sparse bounds for the Bochner-Riesz multipliers was initiated by Benea, Bernicot and Luque \cite{160506401}, who established sparse bounds for a restricted range of parameters $\delta, r  $ and $ s$ below.  
We extend their results, using an alternate, less complicated method of proof, yielding results for all $ \delta >0$.  
In two dimensions our main result is as follows.    

\begin{theorem}\label{t:2} Let $ n=2$, and $ 0< \delta < \tfrac 12 $. 
Let $ \mathbf R (2, \delta )$ be the  open trapezoid  with vertices 
\begin{gather*}
v _{2, \delta ,1} = ( \tfrac {1-2 \delta }4, \tfrac {3 +2 \delta }4 ), \quad 
v _{2, \delta ,2} = ( \tfrac {1+6 \delta }4 , \tfrac {3 +2 \delta }4  ), \quad 
\\
v _{2, \delta ,3} = ( \tfrac {3 +2 \delta }4  ,  \tfrac {1+6 \delta }4 ), \quad 
v _{2, \delta ,4} = ( \tfrac {3 +2 \delta }4,  \tfrac {1-2 \delta }4). 
\end{gather*}
(See Figure~\ref{f:2}.)
There holds 
\begin{equation}\label{e:2sparse}
\lVert B _{\delta } :  (r,s)\rVert < \infty , \qquad  (\tfrac 1 r, \tfrac 1 s) \in   \mathbf R (2, \delta ). 
\end{equation}
Moreover, the inequality above fails  for $\frac 1r + \frac 1s >1$, with $(\frac 1r, \frac 1s)$ not in the closure of $\mathbf{R}(2, \delta)$. 
\end{theorem}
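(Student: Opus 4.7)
The plan is a single-scale analysis. First, decompose
\[
B_{\delta} = B_{\delta}^{0} + \sum_{k\geq 1} B_{\delta}^{k},
\]
where $B_{\delta}^{k}$ is the Fourier multiplier with symbol $(1-\lvert\xi\rvert^{2})_{+}^{\delta}\psi(2^{k}(1-\lvert\xi\rvert))$ for a bump $\psi$ supported in $[1/2,2]$.  Thus $B_{\delta}^{k}$ is localised in frequency to an annulus of width $\sim 2^{-k}$ about the unit circle, the symbol has size $O(2^{-k\delta})$, and by stationary phase the convolution kernel of $B_{\delta}^{k}$ is essentially supported on a ball of radius $\sim 2^{k}$ in physical space.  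The smooth core $B_{\delta}^{0}$ is harmless for all sparse bounds, so the analysis focuses on $B_{\delta}^{k}$ for $k\geq 1$.

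Next, establish single-scale $L^{r}\to L^{s'}$ bounds for $B_{\delta}^{k}$ with geometrically summable decay in $k$ for $(1/r,1/s)\in\mathbf R(2,\delta)$.  The basic inputs are $\lVert B_{\delta}^{k}\rVert_{L^{2}\to L^{2}}\lesssim 2^{-k\delta}$ (Plancherel), $\lVert B_{\delta}^{k}\rVert_{L^{1}\to L^{\infty}}\lesssim 2^{-k(1/2+\delta)}$ (trivially from the annulus measure and the size of the symbol), and the crucial planar Carleson--Sj\"olin/C\'ordoba estimate
\[
\lVert B_{\delta}^{k}\rVert_{L^{4}\to L^{4}}\lesssim 2^{-k\delta},
\]
whose proof exploits the non-vanishing curvature of the circle.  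Interpolating these with the trivial $L^{p}\to L^{p}$ bounds yields $\lVert B_{\delta}^{k}\rVert_{L^{r}\to L^{s'}}\lesssim 2^{-k\epsilon(r,s,\delta)}$ with $\epsilon>0$ precisely when $(1/r,1/s)$ lies in the open trapezoid:  the vertices $v_{2,\delta,1},v_{2,\delta,4}$ on the line $1/r+1/s=1$ arise by interpolating $L^{4}\to L^{4}$ with $L^{2}\to L^{2}$, while $v_{2,\delta,2},v_{2,\delta,3}$ also require the $L^{1}\to L^{\infty}$ input.

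With these in hand, apply the meta-principle that converts scale-by-scale improving inequalities into sparse bounds (as in \cite{2016arXiv161209201C} and the Lacey-type works cited above):  if $T=\sum_k T_k$ with $T_k$ of spatial scale $2^{k}$ satisfying
\[
\lvert Q\rvert^{-1/s'}\lVert T_k(f\mathbf 1_{Q})\rVert_{L^{s'}(Q)}\lesssim c_k\,\langle f\rangle_{Q,r}
\]
for every cube $Q$ of side $\sim 2^{k}$, and $\sum_k c_k<\infty$, then $\lVert T:(r,s)\rVert<\infty$.  The single-scale bounds above furnish precisely this summable control on $\mathbf R(2,\delta)$, establishing~\eqref{e:2sparse}.

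For the sharpness claim in the range $1/r+1/s>1$, test the sparse form against two standard families.  Knapp-type functions adapted to $2^{-k/2}\times 2^{-k}$ caps on the circle saturate the single-scale $L^{4}$ estimate and force $1/r+1/s\leq 1+2\delta$, while plateau / disc-multiplier functions calibrated against the Carleson--Sj\"olin $L^{p}$-range of $B_{\delta}$ itself cap $1/r$ and $1/s$ individually by $(3+2\delta)/4$.  The main obstacle is producing the sharp single-scale $L^{4}$ estimate with the exponent $2^{-k\delta}$: the cruder Stein--Tomas $L^{6/5}\to L^{6}$ bound would yield only a strictly smaller sparse region.  A secondary delicate point is preserving summability in $k$ at the boundary edges of $\mathbf R(2,\delta)$, which requires a careful balance of the three interpolation inputs.
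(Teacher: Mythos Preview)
Your overall architecture---dyadic decomposition of the symbol into thin annuli and then summing sparse bounds for the pieces---matches the paper, but the interpolation you describe does \emph{not} recover the full trapezoid $\mathbf{R}(2,\delta)$. Two concrete issues. First, the claim that the diagonal vertices $v_{2,\delta,1},v_{2,\delta,4}$ arise by interpolating $L^4\to L^4$ with $L^2\to L^2$ is wrong: that interpolation only yields the segment from $(\tfrac14,\tfrac34)$ to $(\tfrac34,\tfrac14)$, whereas $v_{2,\delta,1}=(\tfrac{1-2\delta}{4},\tfrac{3+2\delta}{4})$ has first coordinate $<\tfrac14$. One needs the $L^1\to L^1$ bound $\lVert S_{2^{-k}}\rVert_{1\to 1}\lesssim 2^{k/2}$ (equivalently $\lVert K\rVert_1\lesssim 2^{k/2}$) as an interpolation endpoint. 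Second---and this is the real gap---your single-scale meta-principle cannot reach the off-diagonal vertices $v_{2,\delta,2},v_{2,\delta,3}$. Applying it to cubes of side $\sim 2^k$, the sparse $(1,1)$ constant for the $k$th piece is $\lVert S_{2^{-k}}\rVert_{1\to\infty}\cdot|Q|\sim 2^{-k}\cdot 2^{2k}=2^{k}$. Interpolating this with the $(p_0,p_0')$ bound and summing against $2^{-k\delta}$ yields a strictly smaller region, with upper edge at $\tfrac1r+\tfrac1s=1+\delta$ rather than $1+2\delta$. (Your stated bound $\lVert B_\delta^k\rVert_{1\to\infty}\lesssim 2^{-k(1/2+\delta)}$ is also off; in $\mathbb R^2$ the annulus has measure $\sim 2^{-k}$, giving $2^{-k(1+\delta)}$, but the stronger bound does not help here.)

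The paper closes this gap by working not with operator norms but with a modified sparse form $\tilde\Lambda_{\tau,r,s}$ that sums over \emph{all} dyadic scales $1\le\ell Q\le\tau^{-1-\eta}$, and by proving an interpolation statement (Proposition~\ref{p:interpolate}) directly for these forms. The crucial input is then the pointwise kernel domination $|K_\tau(x)|\lesssim \tau^{-1/2}\sum_{1\le 2^j\le\tau^{-1}}\varphi_j(x)$ with $\varphi_j$ an $L^1$-normalized bump at scale $2^j$; this multi-scale domination yields $\lVert S_\tau:(1,1,\tau)\rVert\lesssim \tau^{-1/2}$ (not $\tau^{-1}$), and interpolating \emph{at the level of the sparse form} with $\lVert S_\tau:(p_0,p_0',\tau)\rVert\lesssim_\epsilon 1$ then reaches $v_{2,\delta,2}$. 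Your sharpness argument is essentially correct and matches the paper's.
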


As $ \delta $ increases to the critical value of $ \delta = \frac{1}2$, the trapezoid $ \mathbf R (2, \delta )$ increases to the upper triangle  with vertices $ (1,0)$, $ (0,1)$ and $ (1,1)$. This is the full arrange allowed for the case of $ \delta = \frac{1}2$, see Theorem~\ref{t:critical}.

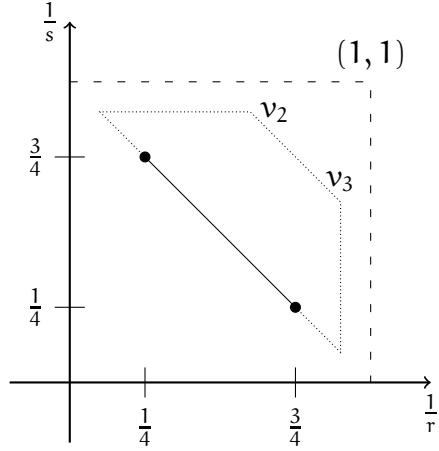
\begin{figure}
\begin{tikzpicture}[scale=4] 
\draw[thick,->] (-.2,0) -- (1.2,0) node[below] {$ \frac 1 r$};
\draw[thick,->] (0,-.2) -- (0,1.2) node[left] {$ \frac 1 s$};
\draw[densely dotted] (.75,.25) --  (.9,.1)
-- (.9,.6) node[above] {$ v_3$} 
-- (.6,.9) node[right] {$ v_2$} 
--  (.1,.9) 
-- (.25,.75) ;
\filldraw  (.25,.75) circle (.04em); \filldraw  (.75,.25) circle (.04em); 
\draw  (.25,.75) -- (.75,.25); 
\draw (.25,.05) -- (.25,-.05) node[below] {$ \tfrac 14$};
\draw (.75,.05) -- (.75,-.05) node[below] {$ \tfrac 34$};
\draw (.05,.25) -- (-.05,.25) node[left] {$ \tfrac 14$};
\draw (.05, .75) -- (-.05, .75) node[left] {$ \tfrac 34$};
\draw[loosely dashed] (0,1) -- (1.,1.) node[above] {$ (1,1)$} -- (1.,0); 
\end{tikzpicture}

\caption{The trapezoid $ \mathbf R (2, \delta )$ of Theorem~\ref{t:2}.  The Bochner-Riesz bounds are sharp at the indices $ \frac{1}p =\frac 34, \frac{1} 4$, which corresponds to the Carleson-Sj\"olin bounds. The sparse bounds for $ B _{\delta }$ hold for all $ (\frac 1 r,\frac 1 s)$ inside the dotted trapezoid, and fails outside the trapezoid. We abbreviate $ v _{2, \delta ,2}=v_2$, and similarly for $ v_3$. } 
\label{f:2}
\end{figure}

In the next section, we give the full statement of the results in all dimensions.
  The remainder of the paper is taken up recalling some details about sparse bounds, the (short) proof of the main results, and then drawing out the weighted corollaries.

\section{The Full Statement} 

In dimensions $ 3$ and higher, we only have partial  information about the Bochner-Riesz conjecture.  
Nevertheless, we show that from this partial information one can obtain sparse bounds as a consequence. 

\begin{conjecture}\label{j:BR}[Bochner-Riesz Conjecture]   We have $ B _{\delta } : L ^{p} (\mathbb R ^{n}) \mapsto L ^{p} (\mathbb R ^{n})$ if 
\begin{equation} \label{e:BR}
n\bigl\lvert \tfrac 1p - \tfrac 12\bigr\rvert < \tfrac 12 + \delta , \quad \delta >0. 
\end{equation}

\end{conjecture}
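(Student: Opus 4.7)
The plan must begin with the honest disclaimer that Conjecture~\ref{j:BR} for $n\ge 3$ is one of the central open problems in harmonic analysis, tightly entangled with the Fourier restriction conjecture for the sphere, so I would not claim to deliver a complete proof — only a roadmap of where partial progress comes from and what goes wrong at the end. The necessary direction of the range \eqref{e:BR} is the easy half and I would dispose of it first by the Knapp example: testing $B_\delta$ against a bump function adapted to a $\rho$-cap on the sphere $\lvert\xi\rvert=1$ and optimizing in $\rho$ forces exactly the linear constraint on $(1/p,\delta)$.

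For the sufficient direction I would perform the standard dyadic decomposition $(1-\lvert\xi\rvert^2)_+^{\delta} = \sum_{k\ge 0} 2^{-k\delta} m_k(\xi)$, with $m_k$ a smooth bump supported in the annulus $1-2^{-k}\lesssim\lvert\xi\rvert\le 1$, and reduce to uniform $L^p$ estimates on the single-shell multipliers that can be summed against the $2^{-k\delta}$ gain. Stationary phase on the sphere shows that each shell operator is essentially a tube-localized extension/averaging operator attached to the sphere, so its $L^p\to L^p$ norm on scale $2^k$ is governed by Fourier restriction. In dimension $n=2$ I would close the argument by invoking the Carleson–Sj\"olin theorem, itself a consequence of H\"ormander's oscillatory integral theorem for phases with nondegenerate rotational curvature, and the conjecture is resolved sharply. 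This is the mechanism behind the sharpness asserted in Theorem~\ref{t:2}.

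In dimensions $n\ge 3$ one can only hope to cover portions of the conjectured range, by climbing the cumulative ladder of (i) Stein complex interpolation between the trivial $L^2$ estimate at $\delta=0$ and the Calder\'on–Zygmund endpoint at $\delta=\tfrac{n-1}{2}$, (ii) the Tomas–Stein restriction theorem, (iii) bilinear restriction (Tao–Vargas–Vega, Tao), (iv) multilinear restriction of Bennett–Carbery–Tao paired with Bourgain–Guth induction on scales, and (v) the polynomial partitioning method of Guth in combination with the $\ell^2$-decoupling theorem of Bourgain–Demeter. The main obstacle is precisely the one that blocks the sphere restriction conjecture in $n\ge 3$: the available bilinear/multilinear estimates are essentially sharp but lose a sliver of information upon linearization, and no mechanism for reclaiming it cleanly is known. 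The paper therefore deliberately sidesteps resolving \eqref{e:BR} — its contribution is to formulate sparse bounds \emph{conditionally}, so that progress on the Bochner–Riesz conjecture is automatically transmuted into progress on the sparse range.
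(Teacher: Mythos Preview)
The statement in question is labeled \emph{Conjecture}~\ref{j:BR}, not a theorem, and the paper does not supply a proof --- it simply states the conjecture, reformulates it equivalently as Conjecture~\ref{j:S} on thin annuli, and records the known partial progress (Theorem~A). Your proposal is honest about exactly this: you flag the problem as open in dimensions $n\ge 3$, sketch the necessity via the Knapp example, note the Carleson--Sj\"olin resolution in $n=2$, and summarize the partial tools available in higher dimensions. That is the correct treatment, and matches the paper's own stance, which is to work \emph{conditionally} on partial information about \eqref{e:S} rather than to attempt the conjecture itself.
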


This has an  equivalent formulation, in terms of `thin annuli,' which is the form we prefer.   
Let $  \mathbf 1_{ [-1/4, 1/4]} \leq \chi   \leq  \mathbf 1_{[-1/2,1/2]}$ be a Schwartz function, and set 
$ S _{\tau  }$ to be the Fourier multiplier with symbol $ \chi (  (\lvert  \xi \rvert -1)/ \tau  )$. 

\begin{conjecture}\label{j:S}  Subject to the condition  
$  n\bigl\lvert \tfrac 1p - \tfrac 12\bigr\rvert < \tfrac 12 $, there holds 
\begin{equation} \label{e:S}
    \lVert S _{\tau  }\rVert _{L ^{p} \mapsto L ^{p}}   \lesssim _{\epsilon } 1,  
\qquad  0 < \tau  < 1.  
\end{equation}
\end{conjecture}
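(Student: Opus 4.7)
The plan is to reduce to endpoint estimates at $p = \tfrac{2n}{n-1}$ and its dual, then interpolate against the trivial $L^2$ bound. At $p=2$, Plancherel gives $\|S_\tau\|_{L^2 \to L^2} \leq \|\chi\|_\infty$, uniformly in $\tau$. All interior exponents in the strip $n\bigl|\tfrac1p - \tfrac12\bigr| < \tfrac12$ then follow by complex interpolation from the endpoint bounds (with at most a $\tau^{-\epsilon}$ loss) and this trivial $L^2$ estimate, which accounts for the $\lesssim_\epsilon$ in the statement.

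To obtain the endpoint bound, I would decompose the annulus $\{\,\xi : \bigl||\xi|-1\bigr| \lesssim \tau\,\}$ via a smooth partition of unity into caps $\theta$ of angular width $\sqrt{\tau}$ and radial thickness $\tau$, of which there are roughly $\tau^{-(n-1)/2}$. Writing $S_\tau = \sum_\theta S_\tau^\theta$, each $S_\tau^\theta$ is essentially convolution with a wave packet localized to a tube of dimensions $\tau^{-1/2} \times \cdots \times \tau^{-1/2} \times \tau^{-1}$. Frequency disjointness of the caps yields the $L^2$ orthogonality
\[
\|S_\tau f\|_2^2 \sim \sum_\theta \|S_\tau^\theta f\|_2^2,
\]
and the main question becomes whether this $L^2$ orthogonality can be promoted to a square function estimate of the form
\[
\|S_\tau f\|_p \lesssim_\epsilon \tau^{-\epsilon} \biggl\| \Bigl( \sum_\theta |S_\tau^\theta f|^2 \Bigr)^{1/2} \biggr\|_p.
\]
Combined with a vector-valued restriction/extension estimate for the sphere of Stein--Tomas type, this would close the endpoint and hence Conjecture~\ref{j:S}.

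The principal obstacle is Stein's square function conjecture. In dimension $n=2$, stationary-phase analysis of the Fourier transform of $\chi(\,\cdot\,/\tau)\,d\sigma$ on the circle, together with the Carleson--Sj\"olin theorem, delivers the required bound on a single cap and the square function bound follows, so Conjecture~\ref{j:S} is in fact a theorem when $n=2$. In dimensions $n \geq 3$, however, the square function conjecture is currently known only in partial ranges, via the work of Bourgain, Tao, Guth, and others, and a full resolution appears to demand new Kakeya-type bounds controlling the overlap of the physical-space tubes in the wave-packet decomposition. A complete proof of Conjecture~\ref{j:S} therefore lies beyond present technology in higher dimensions -- which is precisely why the main theorems of this paper are stated conditionally on partial progress toward Conjecture~\ref{j:S}.
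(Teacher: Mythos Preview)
The statement you were asked to prove is a \emph{conjecture} in the paper, not a theorem, and the paper contains no proof of it. The paper simply asserts that Conjecture~\ref{j:S} is an equivalent reformulation of the Bochner--Riesz Conjecture~\ref{j:BR} (via the dyadic decomposition $B_\delta = T_0 + \sum_k 2^{-k\delta}\operatorname{Dil}_{1-2^{-k}} S_{2^{-k}}$ that appears later), records the known partial ranges in Theorem~A, and then \emph{assumes} partial progress on \eqref{e:S} as a hypothesis in the main results. You have correctly diagnosed the situation: your write-up is not a proof but an honest sketch of one standard line of attack together with an explanation of why it stalls for $n\ge 3$.

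One comment on the route you chose. Your reduction passes through the cap square-function estimate
\[
\|S_\tau f\|_p \lesssim_\epsilon \tau^{-\epsilon}\Bigl\|\Bigl(\sum_\theta |S_\tau^\theta f|^2\Bigr)^{1/2}\Bigr\|_p,
\]
which is the C\'ordoba approach and is indeed how the two-dimensional case is handled. But this square-function bound is, in higher dimensions, a somewhat \emph{stronger} assertion than Conjecture~\ref{j:S} itself; it is tied to Kakeya-type geometry in a way that the bare multiplier bound is not known to be. The paper's perspective is more economical: it treats \eqref{e:S} as directly equivalent to the $L^p$ boundedness of $B_\delta$ for all $\delta>0$ at the given $p$, so that in dimension two the conjecture is a theorem immediately from Carleson--Sj\"olin without any separate square-function argument, and in higher dimensions the best known ranges are precisely those of Bourgain--Guth recorded in Theorem~A. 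Your conclusion---that a full proof is beyond current technology for $n\ge 3$---is exactly right and matches the paper's stance.
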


Above, we use the notation $ A (\tau ) \lesssim _{\epsilon } B (\tau )$ to mean that 
for all $ 0< \epsilon < 1$, there is a constant $ C_ \epsilon $ so that uniformly in $ 0< \tau  <1$, there holds 
$ A (\tau ) \leq C _{\epsilon } \tau ^{- \epsilon } B (\tau )$.  
It is typical in these types of questions that one expects losses in $ \tau $ that are of a logarithmic nature at end points. This issue  need not concern us. 

The Theorem below takes as input partial information about the Bochner-Riesz Conjecture and deduces 
a range of sparse bounds.  
For $ 0< \delta < \frac {n-1}2$, 
let $ \mathbf R (n, p_0, \delta )$ be the open trapezoid with vertexes 
\begin{gather} \label{e:v}
v _{n, \delta ,1} =  ( \tfrac 1 {p_0} (1- \tfrac {2 \delta } {n-1}) , \tfrac 1 {p_0'} +\tfrac 1 {p_0}\tfrac {2 \delta } {n-1}), 
\qquad 
v _{n,\delta ,2} = (\tfrac 1 {p_0}+\tfrac 1 {p_0'} \tfrac {2 \delta } {n-1}, \tfrac 1 {p_0'} +\tfrac 1 {p_0}\tfrac {2 \delta } {n-1}) ,
\\
v _{n,\delta ,3} = \overline {v _{n, p_0,2}} , 
\quad 
v _{n, \delta ,4} = \overline {v _{n, p_0,1}},  \quad \textup{where $ \overline {(a,b)}= (b,a)$.} 
\end{gather}

\begin{theorem}\label{t:higher}  Assume dimension $ n\geq 2$.  And let $ 1< p_0 < 2 $ be such that  the estimate \eqref{e:S} holds.  
  Then, for $ 0< \delta < \frac {n-1}2$, the following sparse bound hold. 
\begin{equation}\label{e:higher}
\lVert B _{\delta } :  (r,s)\rVert < \infty , \qquad  (\tfrac 1 r, \tfrac 1 s) \in \mathbf R (n, p_0, \delta ). 
\end{equation}
Moreover, 
for the critical value of $p = p (\delta)$ given by  $ \tfrac n {p_\delta} =  \tfrac {n+1} 2 + \delta$,  
the inequality above fails  for $\frac 1r + \frac 1s >1$, with $(\frac 1r, \frac 1s)$ not in the closure of $\mathbf{R}(n,  p_\delta, \delta)$.
\end{theorem}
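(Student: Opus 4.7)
The plan is a single-scale analysis: decompose $B_\delta$ into Littlewood--Paley annular pieces $T_j$, prove a sparse bound for each $T_j$ with constant summable in $j$, and combine. Fix a smooth bump $\phi$ supported in $[\tfrac14,1]$ with $\sum_{j\ge 0}\phi(2^j t)\equiv 1$ on $(0,\tfrac12]$ and set $m_j(\xi)=\phi(2^j(1-|\xi|))(1-|\xi|^2)_+^{\delta}$, so that $B_\delta=T_0+\sum_{j\ge 1}T_j$, where $T_j$ is the Fourier multiplier with symbol $m_j$. Each $m_j$ is supported in an annulus of radial width $\sim 2^{-j}$ around $|\xi|=1$ with $\|m_j\|_\infty\lesssim 2^{-j\delta}$, so by hypothesis \eqref{e:S} one has $\|T_j\|_{L^{p_0}\to L^{p_0}}\lesssim_\epsilon 2^{-j\delta}$, by Plancherel $\|T_j\|_{L^2\to L^2}\lesssim 2^{-j\delta}$, and by standard stationary phase the convolution kernel $K_j$ satisfies $|K_j(x)|\lesssim 2^{-j(1+\delta)}(1+|x|)^{-(n-1)/2}$ for $|x|\lesssim 2^j$ with Schwartz decay beyond.

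Next I would establish a single-scale sparse bound for each $T_j$. The spatial scale of $T_j$'s kernel is $2^j$, so on any cube $Q_0$ of that side one derives several competing bilinear single-cube inequalities by pairing the above $L^p$ bounds and the kernel estimate against H\"older: an $L^{p_0}$-type one with cost $2^{-j\delta}$ at the point $(\tfrac1{p_0},\tfrac1{p_0'})$ (and $(\tfrac1{p_0'},\tfrac1{p_0})$ by duality), an $L^2$-type one at $(\tfrac12,\tfrac12)$ with the same cost, and kernel-based ones at the off-diagonal corners with costs proportional to $2^{j((n-1)/2-\delta)}$. Log-convex interpolation of these elementary single-cube estimates produces, at each point $(\tfrac1r,\tfrac1s)$ in their convex hull, a single-cube cost $\lesssim_\epsilon 2^{-j\alpha(r,s)}$, with $\alpha(r,s)$ affine in $(\tfrac1r,\tfrac1s)$ and strictly positive precisely for $(\tfrac1r,\tfrac1s)\in\mathbf{R}(n,p_0,\delta)$. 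A standard Calder\'on--Zygmund stopping-time argument at scale $2^j$ then promotes the single-cube inequality into a sparse form of type $(r,s)$, giving $\lVert T_j:(r,s)\rVert\lesssim_\epsilon 2^{-j\alpha(r,s)}$.

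Summing $\sum_j 2^{-j\alpha(r,s)}<\infty$ for $(\tfrac1r,\tfrac1s)\in\mathbf{R}(n,p_0,\delta)$ yields \eqref{e:higher}. For the sharpness claim I would use that a sparse $(r,s)$ bound at a point on the diagonal $\tfrac1r+\tfrac1s=1$ implies, up to $\epsilon$-loss, the $L^r$--boundedness of $B_\delta$. When $p_0=p_\delta$ is the critical index, the vertices $v_{n,\delta,1}$ and $v_{n,\delta,4}$ of $\mathbf{R}(n,p_\delta,\delta)$ lie on this diagonal exactly at $p=p_\delta$, and the well-known failure of $B_\delta:L^{p_\delta}\to L^{p_\delta}$ for $\delta<(n-1)/2$ precludes sparse bounds at any point outside the closure of $\mathbf{R}(n,p_\delta,\delta)$ with $\tfrac1r+\tfrac1s>1$.

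The main obstacle is the single-scale sparse bound in the middle step, specifically the bookkeeping of the affine exponent $\alpha(r,s)$. One must verify that the various elementary single-cube inequalities interpolate to carve out precisely the trapezoid $\mathbf{R}(n,p_0,\delta)$ (and not a strictly smaller region), which requires a careful geometric match of the three elementary cost exponents to the four trapezoid edges. Handling points above the diagonal $\tfrac1r+\tfrac1s>1$—where the sparse bound is strictly stronger than an $L^p$ bound and no trivial vertex is available—is the subtler half and is the technical heart of the argument.
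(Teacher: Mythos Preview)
Your positive direction is essentially the paper's argument: decompose $B_\delta$ into thin annular multipliers, establish single-scale sparse bounds at the vertices $(1/p_0,1/p_0')$, $(1,1)$, $(1,0)$ via the hypothesis and kernel bounds, interpolate, and sum. (The paper avoids your stopping-time step by summing the local estimates over \emph{all} dyadic cubes of side between $1$ and $2^{j(1+\eta)}$ and absorbing the $O(j)$ loss into the $\epsilon$; this is simpler but equivalent.)

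Your sharpness argument, however, has a genuine gap. A sparse $(r,s)$ bound yields $L^p$-boundedness of $B_\delta$ only for $r<p<s'$. If both $r,s\in(p_\delta,p_\delta')$ this interval sits entirely inside the known Bochner--Riesz range, so the failure of $B_\delta$ on $L^{p_\delta}$ gives no contradiction. Concretely, the $L^p$ argument rules out sparse bounds only in the region $\{1/r>1/p_\delta\}\cup\{1/s>1/p_\delta\}$, which accounts for the two \emph{side} edges of the trapezoid. It says nothing about the \emph{top} edge
\[
\tfrac1r+\tfrac1s=1+\tfrac{2\delta}{n-1},
\]
above which there is a nonempty region with $p_\delta<r,s<p_\delta'$ (since $2/p_\delta>1+\tfrac{2\delta}{n-1}$ for $\delta<\tfrac{n-1}2$). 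Your claim that $v_{n,\delta,1},v_{n,\delta,4}$ sit at $p=p_\delta$ and that the diagonal argument then closes the trapezoid is therefore not correct.

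The paper handles the top edge by an explicit Knapp-type example rather than by $L^p$ failure: take $R=[-1/\lambda,1/\lambda]\times[-c/\sqrt\lambda,c/\sqrt\lambda]^{\,n-1}$, $\widetilde R=R+\lambda^{-1}e_1$, and set $f=e^{i|x|}\mathbf 1_R$, $g=e^{-i|x|}\mathbf 1_{\widetilde R}$. A stationary-phase/kernel asymptotic gives $|\langle B_\delta f,g\rangle|\simeq\lambda^{-(n+1)/2+\delta}$, while the maximal sparse form is attained on the single cube $Q$ of side $\sim\lambda^{-1}$ containing both supports, giving $|Q|\langle f\rangle_{Q,r}\langle g\rangle_{Q,s}\simeq\lambda^{-n+\frac{n-1}{2}(1/r+1/s)}$. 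Comparing exponents as $\lambda\to0$ forces $\tfrac1r+\tfrac1s\le 1+\tfrac{2\delta}{n-1}$, which is exactly the top edge. You need this (or an equivalent lower-bound construction) to complete the sharpness claim.
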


This Theorem contains Theorem~\ref{t:2}, since the Bochner-Riesz conjecture holds in dimension $ 2$, 
as was proved by Carleson-Sj\"olin \cite{MR0361607}.  (Also see C\'ordoba \cite{MR544242}.)  In dimensions $ n\ge3$, the best results are currently due to Bourgain-Guth,  \cite{MR2860188}, but also see Sanghyuk Lee \cite{MR2046812}. 
We summarize the best known information in 

\begin{priorResults}\label{t:BR} These positive results hold for the Bochner-Riesz Conjecture. 
\begin{enumerate}
\item  \cite{MR0361607} In the case of $ n=2$, \eqref{e:BR} holds.  

\item \cite{MR2860188}*{Thm 5} In the case of $ n\geq 3$, the condition \eqref{e:BR} holds if 
$q=\max (p,p')$ satisfies  
\begin{equation}
q >\begin{cases}
 2 \frac {4n+3}{4n-3} & n\equiv 0 \mod 3 
\\
  \frac {2n+1}{n-1} & n\equiv 1 \mod 3
\\
  \frac {4(n+1)}{2n-1} & n\equiv 2 \mod 3 
\end{cases}
\end{equation}
\end{enumerate}
\end{priorResults}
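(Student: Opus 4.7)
The plan is a scale-by-scale analysis that converts the hypothesized single-annulus estimate \eqref{e:S} into a sparse bound for $B _{\delta }$. I first decompose $B _{\delta }$ into pieces $T_j$ whose symbols are supported on annuli of width $2^{-j}$ near $\{ \lvert \xi \rvert = 1 \}$,
\[ B _{\delta } = \sum _{j \geq 0} 2 ^{-j \delta } T_j, \]
each $T_j$ having the structure of $S _{2 ^{-j}}$ with kernel localized at spatial scale $2^j$. The hypothesis \eqref{e:S} gives $\lVert T_j \rVert _{p_0 \to p_0} \lesssim _{\epsilon } 2^{j \epsilon}$, and a direct pointwise bound on the kernel yields trivial $L ^{\infty }$-type estimates at cost $2^{-jn}$ on cubes of side $2^j$.

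The next step is a single-scale sparse bound for each $T_j$. Because $T_j$ is a single-scale operator with uniform $L ^{p_0}$-bounds, the standard recursion (compare \cite{2016arXiv161209201C, 2017arXiv170204569H}) produces a $(p_0, p_0)$-sparse bound with constant $\lesssim _{\epsilon } 2^{j \epsilon}$. Combining this with the trivial sparse bounds at $(1,0)$ and $(0,1)$ from the kernel control, a bilinear interpolation in $(\tfrac 1 r, \tfrac 1 s)$ produces sparse bounds at every point of the quadrilateral with vertices $\{ (\tfrac 1 {p_0}, \tfrac 1 {p_0}), (\tfrac 1 {p_0'}, \tfrac 1 {p_0'}), (1,0), (0,1)\}$, with a constant $2 ^{j(\epsilon + \alpha (r,s))}$ where $\alpha (r,s)$ is explicit and linear on each edge emanating from the $L ^{p_0}$ point.

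Summing the series $\sum _{j \geq 0} 2 ^{-j \delta } \cdot 2 ^{j(\epsilon + \alpha (r,s))}$ then converges precisely when $\alpha (r,s) < \delta$, which by direct computation carves out the open trapezoid $\mathbf R (n, p_0, \delta )$: the four vertices $v _{n, \delta, i}$ sit exactly on the break-even locus $\alpha (r,s) = \delta $. This establishes \eqref{e:higher}. For the negative direction, a Knapp-type example provides the sharpness: taking $f$ essentially the modulated indicator of a tube of dimensions $1 \times \cdots \times 1 \times \rho ^{-1}$ dual to a $\rho $-cap on the unit sphere, and $g$ a translate of the same shape, one computes $\langle B _{\delta } f, g \rangle$ and the natural sparse form separately, and invokes the sharpness of the $L ^{p _{\delta }}$-bound for $B _{\delta }$ at $n / p _{\delta } = (n+1)/2 + \delta$ to force $(\tfrac 1 r, \tfrac 1 s)$ into the closed trapezoid whenever $\tfrac 1 r + \tfrac 1 s > 1$.

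The main technical obstacle is the single-scale sparse bound above. One must pass from an $L ^{p_0} \to L ^{p_0}$ estimate to a $(p_0, p_0)$-sparse bound while sharply tracking the $\epsilon$-loss from \eqref{e:S}, because the argument needs $\alpha (r,s) + \epsilon < \delta$, i.e.\ the open trapezoid is recovered only if the $\epsilon$-losses at each scale aggregate to a constant rather than a power of $\tau$. The spatial localization of the kernel of $T_j$ at scale $2^j$ makes the recursion effective, and the $\epsilon$ is absorbed strictly inside the trapezoid at the cost of a constant depending on $\epsilon$, which is the heart of the matter.
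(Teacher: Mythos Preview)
You have written a proof sketch for the wrong statement. The theorem labeled \texttt{t:BR} (Theorem~A in the paper's numbering) is a \emph{priorResults} environment: it merely records, with citations to Carleson--Sj\"olin \cite{MR0361607} and Bourgain--Guth \cite{MR2860188}, the known partial progress on the Bochner--Riesz Conjecture. The paper does not prove it and does not claim to; it is quoted as background input to the main argument. There is nothing to prove here beyond looking up the cited references.

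Your proposal is in fact an outline of the proof of Theorem~\ref{t:higher} (the sparse bounds for $B_\delta$), and as such it is broadly in the right spirit---dyadic decomposition of the symbol, single-scale sparse bounds for the annular pieces, interpolation, then summation in $j$---which is indeed how the paper proceeds via Theorem~\ref{t:S} and Lemma~\ref{l:main}. Two points of divergence are worth flagging if you intend this for Theorem~\ref{t:higher}: first, the paper obtains a $(p_0,p_0')$ sparse bound (anti-diagonal point $(\tfrac1{p_0},\tfrac1{p_0'})$) directly from the $L^{p_0}\to L^{p_0}$ norm inequality and duality, not a $(p_0,p_0)$ bound via a recursion; your claimed $(p_0,p_0)$ bound would need justification, since a bare $L^{p_0}\to L^{p_0}$ estimate does not by itself yield it. Second, the paper also uses a $(1,1)$ sparse bound (vertex $(1,1)$), coming from pointwise domination of $K_\tau$ by a truncated Calder\'on--Zygmund kernel, which is essential to reach the vertex $v_{n,\delta,2}$; your list of endpoints omits this corner and would not recover the full trapezoid.
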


Concerning sparse bounds for the Bochner-Riesz multipliers, the general result of Benea, Bernicot and Luque \cite{160506401} is a bit technical to state in full generality. We summarize it as follows. 

\begin{priorResults}\label{t:BRS} These two results hold. 
\begin{enumerate}
\item \cite{160506401}*{Thm 1} In dimension $ n=2$, for $ \delta > \tfrac 16$, we have $ \lVert B _{\delta } : (\tfrac 65,2)\rVert < \infty $.  

\item \cite{160506401}*{Thm 3} In dimensions $ n>3$,  for all $ \delta >0$, there is a $ 1< p (\delta )< 2$ for which we have 
$ \lVert B _{\delta } : (p (\delta ),2)\rVert < \infty $. 
(Using our notation,  the sparse bound holds when the second coordinate of $ v _{n, \delta ,2}$ is $ \frac 12$.) 
\end{enumerate}

\end{priorResults}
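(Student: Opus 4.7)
The plan is to establish both parts via a Littlewood--Paley decomposition of $B_\delta$ in the radial frequency variable, combined with Fourier restriction estimates for the sphere and a single-scale sparse-domination lemma. First I fix a smooth radial partition of unity near $|\xi|=1$ and decompose
\begin{equation*}
B_\delta f = \sum_{j\ge 0} 2^{-j\delta}\, M_j f,
\end{equation*}
where $M_j$ is a Fourier multiplier whose symbol is supported in an annulus of thickness $\sim 2^{-j}$ about the unit sphere, and is therefore a bounded constant multiple of the thin-annulus multiplier $S_{2^{-j}}$ from Conjecture \ref{j:S}. Stationary-phase analysis gives $\|K_j\|_{\infty}\lesssim 2^{-j(n-1)/2}$ together with rapid decay of the kernel $K_j$ outside the ball of radius $2^j$, so that $M_j$ is a single-scale operator at spatial scale $2^j$.

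The analytic input is the Stein--Tomas restriction theorem for the sphere $S^{n-1}$, which produces an $L^p\to L^2$ bound for $M_j$ with a quantitative gain in $2^{-j}$, for every $p$ up to (and at) the critical Tomas exponent $2(n+1)/(n+3)$. In dimension $n=2$ this reads as an $L^{6/5}\to L^{2}$ bound for $M_j$ with operator norm $\lesssim 2^{-j/6}$. Equipped with this single-scale $L^p\to L^2$ estimate, one proves a single-scale sparse bound
\begin{equation*}
|\langle M_j f, g\rangle| \lesssim 2^{-j/6}\, \sup \Lambda_{6/5,\,2}(f,g)
\end{equation*}
via the now-standard single-scale-to-sparse template: run a Calder\'on--Zygmund stopping-time on $f$ and $g$, localize to cubes of side $\sim 2^j$, apply the $L^{6/5}\to L^{2}$ estimate for $M_j$ inside each stopping cube, and absorb the Schwartz tails of $K_j$ into the sparse form using rapid off-diagonal decay. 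Summing in $j$ then converges precisely when $\delta > 1/6$, which is part (1). For part (2), I select $p(\delta)$ slightly above the Tomas exponent $2(n+1)/(n+3)$; interpolating the restriction bound with a trivial $L^\infty$ estimate creates a gain $2^{-j\eta(\delta)}$ with $\eta(\delta)>0$ for every $\delta>0$, which is enough to beat $2^{-j\delta}$ while preserving the second coordinate $s=2$ throughout, yielding $\|B_\delta : (p(\delta),2)\|<\infty$.

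The principal obstacle is carrying out the single-scale-to-sparse reduction at an \emph{asymmetric} exponent pair: the Stein--Tomas bound is of type $L^p\to L^2$ rather than $L^p\to L^{p'}$, so one has to track how the $L^{r}$-averages of $f$ at scale $2^j$, the $L^{s'}=L^2$ pairing with $g$, and the Schwartz tails of $K_j$ interact, in such a way that the resulting sparse form comes out with the exact pair $(r,s)=(6/5,2)$ in $n=2$ and $(p(\delta),2)$ in higher dimensions, with no hidden loss in $j$. This bookkeeping is exactly the technical heart of \cite{160506401}, and is precisely the step that the symmetric-averaging approach of the present paper is designed to streamline.
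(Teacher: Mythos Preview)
This statement is recorded in the paper as a \emph{prior result} from \cite{160506401}; the present paper does not supply its own proof, beyond the remark immediately after the statement that part~(1) is recovered as a special case of Theorem~\ref{t:2} (via $v_{2,1/6,2}=(\tfrac12,\tfrac56)$ and the self-adjointness of $B_\delta$). Your outline is essentially a sketch of the Benea--Bernicot--Luque argument, which is indeed restriction-based, so in spirit you are reproducing the proof of the cited reference rather than anything done in this paper. The paper's own method for its stronger Theorem~\ref{t:higher} is different: it bypasses spherical restriction entirely, feeding the thin-annulus $L^{p_0}\to L^{p_0}$ bound \eqref{e:S} directly into a $(p_0,p_0',\tau)$ sparse estimate and then interpolating with the kernel-based $(1,1,\tau)$ and $(1,\infty,\tau)$ bounds of Lemma~\ref{l:main}.

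That said, your numerics are wrong in a way that makes the argument internally inconsistent. In dimension $n=2$ the Stein--Tomas bound for the width-$2^{-j}$ annulus gives $\lVert M_j\rVert_{L^{6/5}\to L^2}\lesssim 2^{-j/2}$, not $2^{-j/6}$. When this is converted into a single-scale $(\tfrac65,2)$ sparse bound at scale $\ell Q\sim 2^{j}$, the H\"older mismatch $\lvert Q\rvert^{\,1-5/6-1/2}=\lvert Q\rvert^{-1/3}\sim 2^{-2j/3}$ forces the sparse constant to be $\lesssim 2^{+j/6}$, a \emph{loss}, not a gain. It is this loss that the factor $2^{-j\delta}$ must absorb, and $\sum_j 2^{-j\delta}2^{j/6}<\infty$ exactly when $\delta>\tfrac16$; with your sign the sum would converge for all $\delta>0$, contradicting the sharpness of the threshold (the point $(\tfrac56,\tfrac12)$ lies outside $\mathbf R(2,\delta)$ for $\delta<\tfrac16$). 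The same confusion reappears in part~(2): one does not ``create a gain $2^{-j\eta(\delta)}$'' by interpolating toward $L^\infty$. Rather, one interpolates the $(p_{\mathrm{Tomas}},2)$ single-scale sparse bound, whose constant grows like $2^{j(n-1)/(2(n+1))}$, with the trivial $(2,2)$ bound, whose constant is $O(1)$, to obtain for each $\delta>0$ an exponent $p(\delta)\in(p_{\mathrm{Tomas}},2)$ at which the single-scale sparse constant grows like $2^{j\delta'}$ for some $\delta'<\delta$. With those signs and that interpolation endpoint corrected, your outline is a faithful summary of \cite{160506401}.
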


Our result provides sparse bounds for the Bochner-Riesz multipliers, for all $ \delta >0$, and all $ p$ in a non-trivial interval around $ 2$.  
It is a routine exercise to verify that a consequence of Theorem~\ref{t:2} that we have 
\begin{equation}\label{e:56}
 \lVert B _{\delta } : (2, \tfrac 65)\rVert < \infty, \qquad n=2,\  \delta >\tfrac 16. 
\end{equation}
Indeed, using the notation Theorem~\ref{t:2}, we have $ v _{2,  1/6 ,2} = (\tfrac 12, \tfrac 56)$.  
This is  the two dimensional result of \cite{160506401}.  

The interest in sparse bounds, besides their quantification of $ L ^{p}$ bounds, is that 
they quickly deliver weighted and vector-valued inequalities.  In many examples, these estimates are sharp 
\cites {MR3531367,MR3625108,2016arXiv161209201C,160401334}, or dramatically simplify existing proofs, and provide weighted inequalities in settings where none were known before \cites{160906364,160908701,2017arXiv170208594L}.
The mechanism to do this is already well represented in the literature \cite{MR3531367}, and was initiated by Benea, Bernicot and Luque \cite{160506401} in the setting of Bochner-Riesz multipliers. 
We point the interested reader there for more information about weighted estimates in the Bochner-Riesz setting. 

\smallskip 

That our result and that of \cite{160506401} coincide at the case of $ r=2$ is not so surprising.  
They approach the problem by using sharp results about spherical restriction, as there is a close connection between the Bochner-Riesz Conjecture and spherical restriction, subject to an index in the restriction question being 2. 
Our approach is more direct, working essentially with the 'single scale' version of the Bochner-Riesz Conjecture directly, through Conjecture~\ref{j:S}.  In both cases, we use the `optimal' unweighted estimates, and derive the sparse bounds.  

Concerning the critical index $ \delta_n = \frac {n-1}2$, 
it is well known that the Bochner-Riesz operator is borderline Calder\'on-Zygmund.  Hence one expects much better sparse bounds.  
The best sharp bound is due to Conde-Alonso, Culiuc, di Plinio and Ou \cite{2016arXiv161209201C}.  
It shows not only sparse bounds in the upper triangle of the $ (\frac 1 r,\frac 1 s)$ plane, but also a quantitative estimate at the vertex $ (1,1)$.  

\begin{priorResults}\label{t:critical} \cite{2016arXiv161209201C}  
In all dimensions $ n\geq 2$, we have 
\begin{equation*}
\lVert B _{\delta_n } : (1, 1+ \epsilon )\rVert \lesssim \epsilon ^{-1}, \qquad 0< \epsilon < \infty .  
\end{equation*}

\end{priorResults}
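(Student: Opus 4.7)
The plan is to treat the critical Bochner--Riesz multiplier $B_{\delta_n}$ (with $\delta_n = (n-1)/2$) as a borderline Calder\'on--Zygmund operator and to apply a general sparse-domination principle, with care to track the quantitative dependence on $\epsilon$.  First I would record the classical kernel asymptotic: up to a smooth, rapidly decaying error, the convolution kernel $K$ of $B_{\delta_n}$ satisfies $\lvert K(x) \rvert \lesssim \lvert x \rvert^{-n}$ together with the corresponding $\lvert x \rvert^{-n-1}$ gradient bound.  This places $B_{\delta_n}$ in the standard Calder\'on--Zygmund class, and Fefferman's classical argument then yields both weak-type $(1,1)$ and $L^p$ boundedness for $1 < p < \infty$.

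Next I would invoke the sparse-domination machine built around the grand maximal truncation
\begin{equation*}
\mathcal{M}_T f(x) = \sup_{Q \ni x} \operatorname*{ess\,sup}_{y \in Q} \lvert T(f \mathbf{1}_{\mathbb{R}^n \setminus 3Q})(y) \rvert.
\end{equation*}
A Lerner-type stopping-time argument shows that whenever both $T = B_{\delta_n}$ and $\mathcal{M}_T$ map $L^r \to L^{r,\infty}$ with operator norm $\lesssim (r-1)^{-1}$ as $r \downarrow 1$, choosing $r = 1 + \epsilon$ and iterating the selection of principal cubes yields the desired sparse-form bound $\lvert \langle B_{\delta_n} f, g \rangle \rvert \lesssim \epsilon^{-1}\, \Lambda_{1, 1+\epsilon}(f, g)$.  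This mechanism is by now routine; the real work is in controlling the grand maximal truncation quantitatively.

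The principal obstacle is therefore the quantitative weak-type estimate $\lVert \mathcal{M}_{B_{\delta_n}} \rVert_{L^{1+\epsilon} \to L^{1+\epsilon, \infty}} \lesssim \epsilon^{-1}$.  Because $\lvert K(x) \rvert \asymp \lvert x \rvert^{-n}$ with only oscillatory cancellation and no extra integrable decay, one cannot pointwise dominate $\mathcal{M}_T$ by the Hardy--Littlewood maximal operator, and a naive truncation loses an uncontrolled logarithm.  The plan is a dyadic decomposition near the critical sphere, $B_{\delta_n} = \sum_{j \geq 0} T_j$, where $T_j$ is a smooth Fourier multiplier supported in an annulus of thickness $2^{-j}$ around $\lvert \xi \rvert = 1$.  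Each $T_j$ is uniformly bounded on $L^2$, has kernel essentially localised at spatial scale $2^{j}$, and enjoys a weak-type $(1,1)$ bound with controlled growth in $j$.  Interpolating between $L^2$ and the weak $(1,1)$ endpoint at the level $r = 1+\epsilon$ and then summing the contributions geometrically in $j$ produces precisely the $\epsilon^{-1}$ blow-up.  Feeding this back into the stopping-time scheme of the previous paragraph closes the argument; the sharpness of the exponent $\epsilon^{-1}$ is already implicit in the failure of strong-type $(1,1)$ for $B_{\delta_n}$.
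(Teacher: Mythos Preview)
This statement is Theorem~C in the paper, a \emph{cited prior result} from Conde-Alonso, Culiuc, di~Plinio and Ou.  The present paper gives no proof of it; immediately after stating it the authors write that they ``do not recover Theorem~\ref{t:critical}.  Indeed we can't as the proof is intrinsically multiscale, whereas ours is not.''  So there is no proof in this paper to compare your proposal against.

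On the proposal itself: your opening paragraph records $\lvert K(x)\rvert \lesssim \lvert x\rvert^{-n}$ correctly but then asserts ``the corresponding $\lvert x\rvert^{-n-1}$ gradient bound.''  That is false at the critical index.  Since $K(x)\sim e^{i\lvert x\rvert}/\lvert x\rvert^{n}$, differentiating the oscillatory factor leaves $\lvert\nabla K(x)\rvert\sim\lvert x\rvert^{-n}$, not $\lvert x\rvert^{-n-1}$; were the better gradient estimate true, $B_{\delta_n}$ would be a genuine Calder\'on--Zygmund operator and one would obtain a $(1,1)$ sparse bound with no $\epsilon^{-1}$ loss at all.  You appear to recognise the real obstruction two paragraphs later (``one cannot pointwise dominate $\mathcal M_T$ by the Hardy--Littlewood maximal operator''), so this is presumably a slip rather than a structural error, but as written the first paragraph contradicts the third.

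Setting that aside, the skeleton you describe---decompose $B_{\delta_n}=\sum_{j\ge0}T_j$ into Fourier pieces supported in annuli of thickness $2^{-j}$ about the unit sphere, with kernels essentially localised at spatial scale $2^{j}$; control each $T_j$ by interpolating an $L^2$ bound against a weak $(1,1)$ bound; feed the resulting estimates into a stopping-time iteration---is indeed the shape of the argument in the cited reference.  There the machinery is organised somewhat differently (a Dini-type summability hypothesis on the pieces rather than a direct grand-maximal-truncation estimate), but the multiscale decomposition and the source of the $\epsilon^{-1}$ loss are as you indicate.
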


Note that the trapezoid of our theorem increases to the upper triangle, as $ \delta $ increases to the critical index $ \delta_n = \frac {n-1}2$.  In that sense, our results `interpolates' the better bounds known in the critical case.
We do not recover Theorem \ref{t:critical}.  Indeed we can't as the proof is intrinsically multiscale, whereas ours is not.

\medskip 

The sparse bounds imply vector-valued and weighted inequalities for the Bochner-Riesz multipliers. 
The weights allowed are in the intersection of certain Muckenhoupt and reverse H\"older classes. 
The inequalities we can deduce are strongest at the vertex $ v _{n, \delta ,2}$, using the notation of \eqref{e:v}. 
Indeed, the weighted consequence is the strongest known for the Bochner-Riesz multipliers. 
The method of deduction follows the model of arguments in \cite{160506401}* {\S 7} and  \cite {MR3531367}*{\S 6}, and so we suppress the details.  

\bigskip 

We conclude with these remarks.       
\begin{enumerate}
\item  Seeger \cite{MR1405600} proves an endpoint weak-type result for the Bochner-Riesz operators in the plane.  The sparse refinement of that is given Kesler  and one of us in \cite{170705844}. 

\item  Extensions of these results to maximal Bochner-Riesz operators is hardly straight forward.  For relevant norm inequalities, see \cites{MR768732,MR1666558,MR2046812}. 

\item Bak \cite{MR1371114} proves endpoint estimates for negative index Bochner-Riesz multipliers. (Also see  Guti\'errez \cite{MR1641626}.)  Aside from endpoint issues, it would be easy to derive sparse bounds for these operators using the techniques of this paper. The $A_{p,q}$ weighted consequences would be new, it seems. The endpoint issues would be interesting.  

\item  It is also of interest to obtain weighted bounds that more explicitly involve the Kakeya maximal function, as is done by Carbery \cite {MR768732} and Carbery and Seeger \cite{MR1765787}. This would require substantially new techniques. 
\end{enumerate}

\begin{ack}
We benefited from conversations with   Andreas Seeger and Richard Oberlin, as well as  careful readings by referees.  
\end{ack}

\section{Background on Sparse Forms} 

We collect some facts concerning sparse bounds.  It is a useful fact that given bounded and compactly supported functions, there is basically one form that controls all others. 

\begin{proposition}\label{p:One} \cite{161001531}* {\S 4} Given $ 1\leq r , s< \infty $, and bounded and compactly supported functions $ f,$ and $ g$, there is a single sparse form $ \Lambda _{\mathcal S_0 ,r ,s } $ for which 
\begin{equation*}
\sup _{\mathcal{S}} \Lambda _{\mathcal{S},r,s} (f,g) \lesssim \Lambda _{\mathcal S_0,r,s} (f,g).   
\end{equation*}
The implied constant is only a function of dimension.  
\end{proposition}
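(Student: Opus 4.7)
The plan is to prove the proposition via a principal-cube stopping-time construction adapted to the pair $(f,g)$, after reducing to dyadic sparse collections via the three-lattice trick.

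First, I would use the standard observation that every cube $Q\subset\mathbb R^n$ is contained in a dyadic cube $Q'$ from one of finitely many translated lattices $\mathcal D_1,\dots,\mathcal D_N$ with $|Q'|\le C_n|Q|$. Applied termwise, this dominates an arbitrary sparse form $\Lambda_{\mathcal S,r,s}(f,g)$ by a sum of at most $N$ sparse forms indexed by $\mathcal D_i$-dyadic sparse collections, up to a dimensional constant coming from the comparison of averages. Hence it suffices to construct, for each fixed lattice $\mathcal D_i$, a single dyadic sparse collection $\mathcal S_0^{(i)}$ dominating every dyadic sparse form $\Lambda_{\mathcal S,r,s}(f,g)$; the single collection $\mathcal S_0$ required by the statement is then the union $\bigcup_i \mathcal S_0^{(i)}$, which is sparse after absorbing the factor $N$ into the implied constant.

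Next, I would construct $\mathcal S_0 = \mathcal S_0^{(i)}$ by the familiar principal-cube recursion. Using boundedness and compact support of $f,g$, pick $Q_0\in\mathcal D$ containing $\operatorname{supp} f\cup\operatorname{supp} g$ and declare $Q_0\in\mathcal S_0$. Recursively, for each $Q\in\mathcal S_0$, let its $\mathcal S_0$-children be the maximal dyadic subcubes $Q'\subsetneq Q$ with
\begin{equation*}
\langle f\rangle_{Q',r}>C_0\langle f\rangle_{Q,r}\quad\text{or}\quad \langle g\rangle_{Q',s}>C_0\langle g\rangle_{Q,s}.
\end{equation*}
Weak-type bounds on the dyadic maximal operators $M^{\mathcal D}_r$ and $M^{\mathcal D}_s$ (acting on $L^r$ and $L^s$ respectively inside $Q$) show that the union of the children has measure at most $|Q|/2$ once $C_0$ is fixed large enough. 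Thus the residual sets $E_Q:=Q\setminus\bigcup_{Q'\text{ child}}Q'$ are pairwise disjoint across $\mathcal S_0$ and satisfy $|E_Q|\ge|Q|/2>|Q|/4$, so $\mathcal S_0$ is sparse in the sense of the paper.

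Finally, to dominate an arbitrary dyadic sparse form, for each $S\in\mathcal S$ let $\pi(S)$ be the minimal cube of $\mathcal S_0$ containing $S$. By maximality in the stopping construction, $\langle f\rangle_{S,r}\le C_0\langle f\rangle_{\pi(S),r}$ and $\langle g\rangle_{S,s}\le C_0\langle g\rangle_{\pi(S),s}$. Grouping by the value of $\pi(S)=Q$ and using sparseness of $\mathcal S$ itself (so $\sum_{S:\pi(S)=Q}|S|\le 4|Q|$, because the sets $E_S\subset S\subset Q$ are pairwise disjoint), I obtain
\begin{equation*}
\Lambda_{\mathcal S,r,s}(f,g)\le C_0^{2}\sum_{Q\in\mathcal S_0}\Big(\sum_{S:\pi(S)=Q}|S|\Big)\langle f\rangle_{Q,r}\langle g\rangle_{Q,s}\lesssim \Lambda_{\mathcal S_0,r,s}(f,g),
\end{equation*}
which is the desired bound. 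The main obstacle is largely organizational: packaging the three-lattice reduction so that the output is a genuine single sparse collection rather than a finite union, and ensuring the stopping recursion terminates below every point (which is where the Lebesgue differentiation for $|f|^r$ and $|g|^s$, valid because $f,g$ are bounded with compact support, is used implicitly to justify that the constructed collection indeed exhausts the relevant scales).
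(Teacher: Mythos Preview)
The paper does not prove this proposition; it is quoted without proof from \cite{161001531}*{\S4}. Your principal-cube stopping-time construction is correct and is exactly the standard argument found there: build $\mathcal S_0$ by stopping simultaneously on $\langle f\rangle_{\cdot,r}$ and $\langle g\rangle_{\cdot,s}$, then dominate any competing sparse form via the parent map $\pi$ together with the Carleson packing $\sum_{S:\pi(S)=Q}|S|\lesssim|Q|$.
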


Second, closely related sparse forms are also controlled by the sparse forms we defined at the beginning of the paper.  For a cube $ Q$, and $ 1 \leq r  < \infty $, set a non-local average to be 
\begin{equation} \label{e:ll}
\langle \! \langle  f \rangle \! \rangle _{Q,r} = \Bigl[ \lvert  Q\rvert ^{-1}    \int  \lvert  f (x)\rvert ^{r}[ 1+ \textup{dist} (x,Q) / |Q|] ^{-(n+1)}dx \Bigr] ^{\frac 1 r}. 
\end{equation} 
And then define a sparse form $ \  \Lambda ' _{\mathcal S, r,s}$ using the non-local averages above in place of $ \langle f \rangle _{Q,r}$. 
These forms are not essentially larger. 

\begin{proposition}\label{p:llgg} \cite{2016arXiv161208881C}*{Lemma 2.8} For bounded and compactly supported functions $ f,g$, and $ 1\leq r,s< \infty $, 
we have 
\begin{equation*}
\sup _{\mathcal S}   \Lambda ' _{\mathcal S, r,s} (f,g) \lesssim 
\sup _{\mathcal S}   \Lambda _{\mathcal S, r,s} (f,g). 
\end{equation*}
\end{proposition}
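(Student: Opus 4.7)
The plan is to reduce the non-local average to a convergent geometric sum of standard local averages over dyadic dilates, and then absorb the resulting dilations via a pointwise comparison with maximal functions.

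First I would decompose $\mathbb{R}^n$ into the cube $2Q$ and the annular shells $A_k(Q) := 2^{k+1} Q \setminus 2^k Q$ for $k \geq 1$. On $A_k(Q)$ the tail factor satisfies $[1 + \mathrm{dist}(x,Q)/|Q|]^{-(n+1)} \lesssim 2^{-k(n+1)}$, and since $|A_k(Q)| \lesssim 2^{kn} |Q|$, summing over the shells yields
\begin{equation*}
\langle\!\langle f \rangle\!\rangle_{Q, r}^{r} \lesssim \sum_{k \geq 0} 2^{-k}\, \langle f \rangle_{2^{k+1} Q, r}^{r}.
\end{equation*}
Taking $r$-th roots (via $(\sum a_k^r)^{1/r} \leq \sum a_k$ for $r \geq 1$), and doing the analogous estimate for $g$ with $s$ in place of $r$, then plugging both back into the definition of $\Lambda'_{\mathcal{S}, r, s}$ and interchanging sums, gives
\begin{equation*}
\Lambda'_{\mathcal{S}, r, s}(f, g) \lesssim \sum_{j, k \geq 0} 2^{-j/r - k/s} \sum_{S \in \mathcal{S}} |S|\, \langle f \rangle_{2^{j+1} S, r}\, \langle g \rangle_{2^{k+1} S, s}.
\end{equation*}

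Next I would bound the inner sum uniformly in $j, k$. Writing $M_r h := M(|h|^r)^{1/r}$ for the $r$-th maximal function, the estimate $\langle f \rangle_{2^{j+1} S, r} \leq \inf_{y \in S} M_r f(y)$ holds (any $y \in S \subset 2^{j+1} S$ makes $2^{j+1} S$ admissible in the sup defining $M_r f(y)$), and similarly for $g$. The sparsity of $\mathcal{S}$, i.e., the disjointness of the bodies $\{E_S\}$ with $|E_S| > |S|/4$, then gives
\begin{equation*}
\sum_{S \in \mathcal{S}} |S|\, \langle f \rangle_{2^{j+1} S, r}\, \langle g \rangle_{2^{k+1} S, s} \leq 4 \int_{\mathbb{R}^n} M_r f \cdot M_s g\, dx,
\end{equation*}
uniformly in $j, k$. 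Since $\sum_{j, k \geq 0} 2^{-j/r - k/s} < \infty$ (as $1 \leq r, s < \infty$), the proof reduces to controlling $\int M_r f \cdot M_s g$ by a single $(r,s)$-sparse form.

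The hard part will be this final sparse majorization of $\int M_r f \cdot M_s g$. The natural route is a Lerner-type principal-cube construction applied jointly to $f$ and $g$, producing a sparse family $\mathcal{S}_0$ on which the pointwise maximal-function estimates collapse back to the original $(r,s)$ averages of $f$ and $g$. This is precisely the content of \cite{2016arXiv161208881C}*{Lemma 2.8}, which I would invoke as a black box, and combining it with Proposition~\ref{p:One} closes the argument.
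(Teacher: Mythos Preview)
The paper does not give a proof of this proposition; it is simply quoted from \cite{2016arXiv161208881C}*{Lemma~2.8}. So there is no argument in the present paper to compare yours against, and your write-up is already more than what appears here.

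Your reduction is correct: the annular decomposition of the tail weight, the passage from $r$th powers to first powers, and the Carleson-embedding step
\[
\sum_{S\in\mathcal S}|S|\,\langle f\rangle_{2^{j+1}S,r}\,\langle g\rangle_{2^{k+1}S,s}
\;\le\; 4\int M_r f\cdot M_s g
\]
via the disjoint sets $E_S$ are all sound. (For $1/r+1/s\le 1$ the right-hand side of the proposition is already $+\infty$ for compactly supported $f,g$, by testing on a single large cube, so only the regime $1/r+1/s>1$ matters; there $\int M_r f\cdot M_s g$ is finite.)

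The one genuine issue is circularity in your last paragraph. You say that the sparse bound for $\int M_r f\cdot M_s g$ ``is precisely the content of \cite{2016arXiv161208881C}*{Lemma~2.8}.'' But that lemma \emph{is} Proposition~\ref{p:llgg}; it is the statement you are proving. You cannot invoke it as a black box. Fortunately you have already named the fix: the joint principal-cube construction you sketch does the job on its own. Working in a dyadic grid (and summing over the $3^n$ shifted grids at the end), start from a cube $Q_0$ containing the supports, select maximal dyadic $Q\subset Q_0$ with $\langle f\rangle_{Q,r}>C\langle f\rangle_{Q_0,r}$ or $\langle g\rangle_{Q,s}>C\langle g\rangle_{Q_0,s}$; for $C$ large their union has measure at most $\tfrac12|Q_0|$, and off this union both dyadic maximal functions are controlled by the top averages. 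Iterating yields a sparse $\mathcal S_0$ with $\int M_r^d f\cdot M_s^d g\lesssim \Lambda_{\mathcal S_0,r,s}(f,g)$. Replace the citation with these three lines and your proof is complete and self-contained.
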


A central point is that the selection of the 'optimal' sparse form in Proposition~\ref{p:One} is certainly non-linear. But at the same time, one would ideally like to interpolate sparse bounds. We do not know how to do this in general, but the analysis of the operators $ S _{\tau }$, being `single scale', places us in a situation where we can interpolate.  

Using the notation of \eqref{e:ll}, for $ 0< \tau  <1$, set  
\begin{equation}\label{e:til}
\tilde \Lambda _{\tau , r,s} (f,g) = 
\sum_{\substack{Q\in \mathcal D\\  1\leq \ell Q \leq \frac1{ \tau ^{1+ \eta }}  }}
\!\!\langle \! \langle  f \rangle \! \rangle_{Q,r} \langle \! \langle  g \rangle \! \rangle _{Q,s}\lvert  Q\rvert, \qquad 0< \eta < 1. 
\end{equation}
Above, $ \mathcal D$ denotes the dyadic cubes in $ \mathbb R ^{n}$, and $ \ell Q = \lvert  Q\rvert ^{\frac 1n} $ is the side length of $ Q$. That is, the sum is over all dyadic subcubes with side length between 1 and $ 1/ \tau ^{1+ \eta}  $.  
We have  this interpolation fact. 

\begin{proposition}\label{p:interpolate} Let $ 1\leq r_j,s_j \leq \infty $ for $ j=0,1$ and fix $0< \tau <  \infty $. 
Suppose that for some linear operator $ T$ we have 
\begin{equation}\label{e:01}
\lvert  \langle T f, g \rangle\rvert  \leq C_j \tilde \Lambda _{\tau , r_j,s_j} (f,g), \qquad j=0,1, 
\end{equation}
for all smooth compactly supported functions $f,g$.  
Then, for $ 0< \theta < 1$, we have 
\begin{equation}\label{e:theta}
\lvert  \langle T f, g \rangle\rvert  \leq C_0 ^{\theta } C_1 ^{1- \theta } \tilde \Lambda _{\tau , r_ \theta ,s_ \theta} (f,g), 
\end{equation}
where $\frac{1}{r_\theta} = \frac{\theta}{r_0}+\frac{1-\theta}{r_1}$, and similarly for $s _{\theta}$.  
\end{proposition}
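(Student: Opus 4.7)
The plan is to run a complex interpolation argument modeled on Stein's analytic interpolation theorem, adapted to the bilinear-form setting. Since both hypotheses bound the same quantity $\langle Tf,g\rangle$ against the same family of forms $\tilde\Lambda_{\tau,\cdot,\cdot}$, I would interpolate the identity $\langle Tf,g\rangle = \langle Tf_z,g_z\rangle|_{z=\theta}$ along a one-parameter analytic family $F(z) = \langle Tf_z,g_z\rangle$ in the strip $0 \le \operatorname{Re}(z) \le 1$, use the hypotheses on the two boundary lines, and conclude via the Hadamard three-lines theorem.

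The explicit setup is as follows. After reducing to simple $f,g$ by density (both the hypothesis and the target are continuous under bounded pointwise convergence), take
\begin{equation*}
f_z = \operatorname{sgn}(f)\,|f|^{r_\theta/r(z)}, \qquad g_z = \operatorname{sgn}(g)\,|g|^{s_\theta/s(z)},
\end{equation*}
with $1/r(z) = z/r_0 + (1-z)/r_1$ and $1/s(z) = z/s_0 + (1-z)/s_1$, so that $r(\theta)=r_\theta$, $s(\theta)=s_\theta$, and $f_\theta=f$, $g_\theta=g$. For simple $f,g$ the function $F(z)$ is entire. On the line $\operatorname{Re}(z)=j$ one has $|f_z|^{r_j}=|f|^{r_\theta}$ pointwise (and analogously for $g_z$), so $\langle\!\langle f_z\rangle\!\rangle_{Q,r_j}=\langle\!\langle f\rangle\!\rangle_{Q,r_\theta}^{r_\theta/r_j}$, and the hypothesis becomes
\begin{equation*}
|F(z)| \;\le\; C_j \sum_Q |Q|\,\langle\!\langle f\rangle\!\rangle_{Q,r_\theta}^{r_\theta/r_j}\,\langle\!\langle g\rangle\!\rangle_{Q,s_\theta}^{s_\theta/s_j}
\end{equation*}
on that line. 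Applying three-lines at $z=\theta$ controls $|\langle Tf,g\rangle|$ by $C_0^\theta C_1^{1-\theta}\,A^\theta B^{1-\theta}$, where $A$ and $B$ denote the two boundary sums. The interpolation identities $\theta/r_0+(1-\theta)/r_1=1/r_\theta$ and $\theta/s_0+(1-\theta)/s_1=1/s_\theta$ are exactly what force the cumulative exponents on $\langle\!\langle f\rangle\!\rangle_{Q,r_\theta}$ and $\langle\!\langle g\rangle\!\rangle_{Q,s_\theta}$ to equal $1$ cube by cube.

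The main technical obstacle is that this last reassembly is not a direct application of Hölder: a naive Hölder passes the outer geometric mean through the cube sum in the wrong direction, giving only $\tilde\Lambda_{\tau,r_\theta,s_\theta}(f,g) \le A^\theta B^{1-\theta}$. To produce the desired inequality $A^\theta B^{1-\theta}\lesssim \tilde\Lambda_{\tau,r_\theta,s_\theta}(f,g)$ one has to upgrade the analytic family by incorporating an analytic test sequence $(\alpha_Q(z))$ on the cubes of $\mathcal D$ with $1\le \ell Q \le \tau^{-1-\eta}$, run three-lines on the enlarged family, and then recombine by Hölder on the test sequence; equivalently, this amounts to applying three-lines termwise in $Q$ and summing. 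The truncation $1 \le \ell Q \le \tau^{-1-\eta}$ keeps the family of contributing cubes essentially finite for bounded, compactly supported $f,g$, guaranteeing both the analyticity and the legitimacy of this discretization. Tracking the constants through the boundary bounds and the three-lines combination then yields the desired $C_0^\theta C_1^{1-\theta}\,\tilde\Lambda_{\tau,r_\theta,s_\theta}(f,g)$.
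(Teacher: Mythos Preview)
Your setup is exactly the paper's: form the analytic family $F(z)=\langle Tf_z,g_z\rangle$ with the standard Riesz--Thorin deformation $f_z=\operatorname{sgn}(f)\,|f|^{\,r_\theta/r(z)}$, $g_z=\operatorname{sgn}(g)\,|g|^{\,s_\theta/s(z)}$, and invoke the three-lines (Lindel\"of) theorem on the strip. You also correctly isolate the only delicate point: on the boundary $\operatorname{Re}z=j$ the hypothesis controls $|F|$ by the sums
\[
S_j \;=\; \sum_Q |Q|\,\langle\!\langle f\rangle\!\rangle_{Q,r_\theta}^{\,r_\theta/r_j}\,\langle\!\langle g\rangle\!\rangle_{Q,s_\theta}^{\,s_\theta/s_j},
\]
and H\"older over $Q$ gives $\tilde\Lambda_{\tau,r_\theta,s_\theta}(f,g)\le S_0^{\theta}S_1^{1-\theta}$, which is the wrong direction for the reassembly.

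Where your argument has a genuine gap is the proposed fix. ``Applying three-lines termwise in $Q$ and summing'' cannot work as written: $\langle Tf_z,g_z\rangle$ is a single scalar-valued analytic function and does not split as a sum over $Q$, so there is no per-cube analytic function to interpolate. The alternative phrasing, an ``analytic test sequence $(\alpha_Q(z))$'', is only a name; you have not said what object it multiplies, what the boundary estimates become after the modification, nor why the recombination produces $\tilde\Lambda_{\tau,r_\theta,s_\theta}(f,g)$ rather than $S_0^\theta S_1^{1-\theta}$ again. As it stands this step is a promissory note, not a proof.

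The paper handles this point differently and more directly. It first records the H\"older relation $B_{r_\theta,s_\theta}(f,g)\le B_{r_0,s_0}(f,g)^{\theta}B_{r_1,s_1}(f,g)^{1-\theta}$ among the generalized forms $B_{r,s}$ of \eqref{e:B}, and then observes that the full statement (hypotheses and conclusion together) is invariant under the three rescalings $f\mapsto c_1 f$, $g\mapsto c_2 g$, and $d\lambda\mapsto c_3\,d\lambda$; under the last the constants transform as $C_j\mapsto C_j c_3^{-1/r_j-1/s_j}$, which is compatible with the interpolated constant $C_0^\theta C_1^{1-\theta}$ since $\theta(1/r_0+1/s_0)+(1-\theta)(1/r_1+1/s_1)=1/r_\theta+1/s_\theta$. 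These degrees of freedom are used to normalize $B_{r_j,s_j}(f,g)=1$ for $j=0,1$ before running the three-lines argument, so that the boundary bounds read $|F(j+i\sigma)|\le C_j$ and Lindel\"of yields $|\beta(f,g)|\le C_0^\theta C_1^{1-\theta}$ directly. No auxiliary sequence over cubes is introduced.
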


The proof is a variant of  Riesz-Thorin interpolation, but we include some details, as the proposition is new in this context, as far as we know.
\begin{proof}
Let us recast the sparse bound in a slightly more general format.  
For cubes $ Q$, set 
\begin{equation*}
\langle \! \langle  f \rangle \! \rangle_{\lambda , Q,r} = \Bigl[ \frac{1}{|Q|} \int  \frac{ \lvert  f (x)\rvert ^{r} } { (1+ \operatorname {dist} (x,Q) / |Q| ) ^{n+1}}  \   {d \lambda (x)}  \Bigr] ^{\frac 1 r}. 
\end{equation*}
Above, $ \lambda $ is some Borel measure.  Fix a finite collection of cubes $ \mathcal Q$, and consider 
a `sparse form' given by 
\begin{equation} \label{e:B}
B _{r,s} (f,g) = B _{\mathcal Q, \lambda , w,r,s } (f,g)= \sum_{Q\in \mathcal Q} w (Q) \{f\} _{\lambda , Q,r} \{g\} _{\lambda ,Q,s}. 
\end{equation}
Above $ w : \mathcal Q \mapsto (0, \infty )$ is a non-negative function.  The sparse forms that we consider are special instances of these more general forms. 

Appeal to H\"older's inequality.  Given $ r_0 < r_1$ and  $ s_0 < s_1$, 
we have 
\begin{equation*}
B _{ r _{\theta },s _{\theta } } (f,g) 
\leq B _{ r_0,s_0 } (f,g) ^{\theta  }B _{ r_1,s_1 } (f,g) ^{1-\theta  }, 
\qquad 0< \theta < 1
\end{equation*}
where $ \frac{1} {r_ \theta } = \frac \theta {r_0} + \frac{1- \theta } {r_1}$, and similarly for $ s _{\theta }$. 

\smallskip
Let us consider a bilinear form $ \beta $ for which we have 
\begin{equation*}
\lvert  \beta (f,g)\rvert \leq A_j  B _{ r_j,s_j } (f,g), \qquad j=0,1.  
\end{equation*}
By multiplying $ f$, $g$ and the  measure $ \lambda  $ by various constants, we can assume that 
\begin{equation*}
 B _{ r_j,s_j } (f,g)=1 , \qquad  j=0,1. 
\end{equation*}  
For $ 0< \theta < 1$, consider the holomorphic function $ F (s) = 
\beta (f_s, g_s)$, where 
\begin{equation*}
f_s = \textup{sgn} (f) \lvert  f\rvert ^{ (1-s) \frac {r_ \theta } {r_0} + s  \frac{r_ \theta } {r_1} } ,  
\end{equation*}
and similarly for $ g_s$.   The function $ F (s)$ is of at most exponential growth in the strip $ 0 \le \operatorname  {Re} s \leq 1$. Namely, 
\begin{equation*}
\lvert F(s)\rvert 
\leq B_{r_1,s_1} (f_s,g_s) \leq C e^{C \lvert s\rvert } ,  \qquad 0<  \operatorname  {Re} s \leq 1. 
\end{equation*}
for some finite positive constant $C$. This is because  $f$ and $g$ are bounded functions,  and we have a  finite collection of cubes $\mathcal Q$.  Our deduced bounds are independent of these \emph{a priori} assumptions. 
It also holds that  $ \lvert  F (j + i \sigma )\rvert \leq  A_j  $, for $ j=0,1$.  It follows from Lindel\"of's Theorem that $ F$ is log-convex on $ [0,1]$. In particular, 
\begin{equation*}
\lvert  F (\theta ) \rvert = \lvert  \beta (f,g)\rvert \leq A_0 ^{\theta } A_1 ^{1- \theta }  . 
\end{equation*}
From this, we conclude our proposition. 
\end{proof}

\section{Proof of the Sparse Bounds} 

The connection between the Bochner-Riesz  and the $ S _{\tau }$ multipliers is 
well-known, and central to standard papers in the subject like \cites{MR544242,MR768732}. We briefly recall it here. 
For each $ 0 < \delta < \frac {n-1}2$, we have 
\begin{equation*}
B _{\delta } = T_0 + \sum_{k=1} ^{\infty } 2 ^{- k \delta } \operatorname {Dil} _{1- 2 ^{-k}} S_ {2 ^{-k}}, 
\end{equation*}
where these conditions hold:  First, $ T_0$ is a Fourier multiplier, with the multiplier being a Schwartz function supported near the origin.  The operator $ \operatorname {Dil}_s f (x) = f (x/s)$ is a dilation operator. 
And, each $ S _{2 ^{-k}}$ is a Fourier multiplier with symbol $ \chi _k ( 2 ^{k}\bigl\lvert \lvert  \xi \rvert -1\bigr\rvert)$, where the $ \chi _k $ satisfy a uniform class of derivative estimates.  

The point is then to show this result, in which we exploit the openness of the condition we are seeking to prove.  

\begin{theorem}\label{t:S}
Assume dimension $ n\geq 2$.  And let $ 1< p_0 < 2 $ be such that  the estimate \eqref{e:S} holds.  
Then, the following sparse bounds hold. 
For all  $ (\frac 1 r, \frac 1 s) \in \mathbf R (n, p_0, \delta )$,  
there is a $ \kappa = \kappa (r,s) >0 $ so that 
\begin{equation}\label{e:higherS} 
\lVert  S _{\tau }  :  (r,s)\rVert  \lesssim _{\epsilon } \tau ^{- \delta + \kappa } , \qquad 0< \tau < 1. 
\end{equation}
\end{theorem}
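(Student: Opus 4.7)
The plan is to establish the single-scale sparse bound $|\langle S_\tau f,g\rangle|\lesssim \tau^{-\delta+\kappa}\,\tilde\Lambda_{\tau,r,s}(f,g)$, which by Propositions~\ref{p:One} and~\ref{p:llgg} yields~\eqref{e:higherS}. The main engine is the sparse interpolation of Proposition~\ref{p:interpolate}, which reduces the task to proving sparse bounds at a handful of distinguished endpoints and then filling in the trapezoid.

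My first step is to catalogue the endpoints. From the hypothesis~\eqref{e:S} and the essential support of the kernel $K_\tau$ on the ball of radius $\tau^{-1}$ (with Schwartz decay beyond), I would derive a $(p_0,p_0')$-sparse bound on cubes of side $\tau^{-1}$ with constant $O(\tau^{-\epsilon})$, and by duality a $(p_0',p_0)$-sparse bound with the same constant. The stationary-phase estimate $|K_\tau(x)|\lesssim \tau(1+|x|)^{-(n-1)/2}$ for $|x|\le \tau^{-1}$ gives $\|S_\tau\|_{L^\infty\to L^\infty}\lesssim \tau^{-(n-1)/2}$, translating to $(\infty,1)$- and $(1,\infty)$-sparse bounds with that constant. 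The most delicate endpoint is the $(1,1)$-sparse bound: decompose $K_\tau=\sum_{0\le j\lesssim \log(1/\tau)}K_\tau^{(j)}$ with $K_\tau^{(j)}$ supported in $|x|\sim 2^j$ and pointwise bounded by $\tau\,2^{-j(n-1)/2}$, partition $\mathbb R^n$ into dyadic cubes of side $2^j$, and bound adjacent-cube interactions by products of $L^1$ averages; scale $j$ then contributes a $(1,1)$-sparse piece with constant $\tau\,2^{j(n+1)/2}$, and summing over $0\le j\lesssim\log(1/\tau)$ (absorbing neighbor-cube masses into the non-local averages $\langle\!\langle\cdot\rangle\!\rangle_{Q,1}$ from~\eqref{e:ll}) yields a $(1,1)$-sparse bound with total constant $\tau^{-(n-1)/2}$, dominated by the largest scale.

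With these endpoints, Proposition~\ref{p:interpolate} reaches the four vertices of the trapezoid. Setting $\theta'=2\delta/(n-1)\in(0,1)$: at the duality-line vertex $v_{n,\delta,1}=((1-\theta')/p_0,\,1/p_0'+\theta'/p_0)$, interpolating the $(p_0,p_0')$- and $(\infty,1)$-sparse bounds with weight $1-\theta'$ on the former yields constant $(\tau^{-\epsilon})^{1-\theta'}(\tau^{-(n-1)/2})^{\theta'}\lesssim_\epsilon \tau^{-\delta}$; the symmetric pairing handles $v_{n,\delta,4}$. For the above-duality vertices $v_{n,\delta,2},v_{n,\delta,3}$, I would pair the appropriate $(p_0,p_0')$- or $(p_0',p_0)$-sparse bound with the multi-scale $(1,1)$-sparse bound at the same parameter $\theta'$, again producing constant $\tau^{-\delta}$. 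Iterated interpolation across the convex hull of the four vertices propagates $\tau^{-\delta}$ to every point of $\overline{\mathbf R(n,p_0,\delta)}$.

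To upgrade $\tau^{-\delta}$ to $\tau^{-\delta+\kappa}$ with $\kappa>0$ at an interior point $(1/r,1/s)$, I would use openness: there exists $\delta'<\delta$ with $(1/r,1/s)\in\overline{\mathbf R(n,p_0,\delta')}$, and re-running the interpolation with parameter $2\delta'/(n-1)$ in place of $\theta'$ produces sparse constant $\tau^{-\delta'}=\tau^{-\delta+\kappa}$ for $\kappa=\delta-\delta'>0$---exactly what is needed so that $\sum_k 2^{-k\delta}\|S_{2^{-k}}:(r,s)\|$ converges in the Littlewood--Paley decomposition of $B_\delta$. The principal obstacle I anticipate is the multi-scale $(1,1)$-sparse endpoint: this is where the genuine gain $\tau^{-(n-1)/2}$ (rather than the naive single-scale $\tau^{1-n}$) must be harvested, and reconciling the dyadic-in-distance kernel decomposition with the non-local averages of~\eqref{e:til} is the subtle part of the argument.
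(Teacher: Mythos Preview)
Your proposal is correct and follows essentially the same approach as the paper's own proof. You correctly identify the three endpoint sparse bounds for $S_\tau$ on the form $\tilde\Lambda_{\tau,r,s}$ --- namely $(p_0,p_0')$ with constant $\tau^{-\epsilon}$ from the hypothesis~\eqref{e:S}, $(1,\infty)$ with constant $\tau^{-(n-1)/2}$ from the $L^1$ norm of $K_\tau$, and the multi-scale $(1,1)$ bound with the same constant via the dyadic kernel decomposition --- then interpolate via Proposition~\ref{p:interpolate} to reach the vertices of $\mathbf R(n,p_0,\delta)$, and finally invoke openness of the trapezoid to upgrade $\tau^{-\delta}$ to $\tau^{-\delta+\kappa}$; this is exactly the paper's Lemma~\ref{l:main} followed by its proof of Theorem~\ref{t:S}. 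Your explicit pairing of $(p_0,p_0')$ with $(\infty,1)$ for $v_{n,\delta,1}$ and with $(1,1)$ for $v_{n,\delta,2}$ is a direct (and arguably cleaner) variant of what the paper does, which instead reaches the dual vertices $v_{n,\delta,3},v_{n,\delta,4}$ and then appeals to the self-duality of the bound.
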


The papers of C\'ordoba \cites{MR544242,MR0447949} also proceeds by analysis of the operators $S_\tau$. Also see Duoandikoetxea \cite{MR1800316}*{Chap 8.5}. 
Write   $ S _{\tau } f  = K _{\tau } \ast f $.    The basic properties of this operator and kernel that we need are these.  

\begin{lemma}\label{l:S} For $ 0< \tau < \frac{1}2$, these properties hold. 
\begin{enumerate}

\item We have this estimate for the kernel $ K _{\tau }$.  For all $ 0 < \eta < 1$ and $ N > 1$, 
\begin{equation}\label{e:K}
\lvert K _{\tau } (x) \rvert   
\lesssim \tau  \cdot  \begin{cases}
[ 1 + \lvert  x\rvert ] ^{-\frac {n-1}2  }   & \lvert  x\rvert < C \tau ^{-1 - \eta } 
\\
\lvert x\rvert ^{\frac{1-n}2} [ \tau \lvert  x\rvert ]^{-N}  & \textup{otherwise}.   
\end{cases}
\end{equation}
The implied constants depend upon $0< \eta <1$, and $N>1$.  

\item   $ \lVert S _{\tau }\rVert _{1 \mapsto 1} \lesssim \tau ^{- \frac{n-1}2}$. 

\end{enumerate}

\end{lemma}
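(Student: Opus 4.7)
My plan is to compute $K_\tau$ explicitly in polar coordinates, reducing the problem to the classical asymptotics of the Fourier transform of surface measure on $S^{n-1}$. Writing $\xi = r\omega$ with $\omega\in S^{n-1}$ and substituting $r = 1+\tau s$, one obtains
\begin{equation*}
K_\tau(x) = \tau \int \chi(s)(1+\tau s)^{n-1}\,\widehat{d\sigma}((1+\tau s)x)\,ds.
\end{equation*}
Into this I would substitute the standard asymptotic expansion
\begin{equation*}
\widehat{d\sigma}(y) = |y|^{-(n-1)/2}\bigl(a_+(y)e^{2\pi i|y|} + a_-(y)e^{-2\pi i|y|}\bigr) + O(|y|^{-(n+1)/2}),
\end{equation*}
where $a_\pm$ are smooth symbols of order zero. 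Each leading term then produces an oscillatory factor $e^{\pm 2\pi i|x|}$ multiplied by an integral of a compactly supported Schwartz function of $s$ against $e^{\pm 2\pi i \tau s|x|}$—essentially a Fourier transform evaluated at $\tau|x|$, which is therefore rapidly decreasing in $\tau|x|$.

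This yields the unified pointwise estimate
\begin{equation*}
|K_\tau(x)| \lesssim_M \tau(1+|x|)^{-(n-1)/2}(1+\tau|x|)^{-M}
\end{equation*}
for any $M \geq 1$, with the region $|x| \lesssim 1$ handled separately by the trivial bound $\|K_\tau\|_\infty \leq \|\chi((|\cdot|-1)/\tau)\|_{L^1(\mathbb{R})} \lesssim \tau$. Both cases of part (1) are immediate consequences: when $|x| < C\tau^{-1-\eta}$ the Schwartz factor is bounded by $1$, yielding the slow-decay estimate; when $|x|$ exceeds this threshold, the same Schwartz factor delivers the rapid decay $(\tau|x|)^{-N}$.

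For part (2), I would simply integrate the unified bound. In the inner region $|x| \lesssim \tau^{-1}$, the Schwartz factor is $\sim 1$ and direct integration gives $\tau \cdot \tau^{-(n+1)/2} = \tau^{-(n-1)/2}$. In the outer region, the change of variables $y = \tau x$ converts the contribution to $\tau^{-(n-1)/2}\int_{|y|\geq 1}|y|^{-(n-1)/2}(1+|y|)^{-M}\,dy$, which is finite for $M > (n+1)/2$. Combining, $\|K_\tau\|_{L^1} \lesssim \tau^{-(n-1)/2}$, so the $L^1\to L^1$ bound for $S_\tau$ follows from Young's inequality.

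The only mildly delicate point is tracking the $O(|y|^{-(n+1)/2})$ remainder in the asymptotic expansion, verifying that its contribution only improves the decay in both regimes. This is routine but a bit bookkeeping-intensive; the exponent $\eta$ in the statement provides a buffer to absorb the transitional behavior near $\tau|x| \sim 1$ at the cost of an $\eta$-dependent constant, so no logarithmic losses appear in the final estimates.
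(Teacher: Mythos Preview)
Your approach is essentially the same as the paper's: both write $K_\tau$ in polar coordinates, invoke the asymptotic expansion of $\widehat{d\sigma}$ (the paper phrases this via the Bessel function $J_{(n-2)/2}$ and its symbol-type amplitudes $a_\pm$), and then extract the rapid decay for $\tau|x|\gg 1$ by integration by parts in the radial variable---which is exactly your observation that the $s$-integral is a Fourier transform of a Schwartz amplitude evaluated at $\tau|x|$. Your substitution $r=1+\tau s$ and the resulting unified bound $|K_\tau(x)|\lesssim_M \tau(1+|x|)^{-(n-1)/2}(1+\tau|x|)^{-M}$ is in fact a slightly cleaner packaging than the paper's two-case statement (and renders the parameter $\eta$ superfluous); just be sure to use the \emph{exact} expansion with $a_\pm$ genuine order-zero symbols rather than a bare $O(|y|^{-(n+1)/2})$ remainder, since a non-oscillating error of that size would not by itself produce arbitrary $(\tau|x|)^{-N}$ decay.
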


\begin{proof}

The second estimate follows from the first.  The first is seen this way.  
Let $ \sigma $ denote normalized Haar measure on the sphere $ S ^{n-1} \subset \mathbb R  ^{n}$.  
Then, recall \cite{MR1232192}*{Chap VIII.3}, that the Fourier transform of $ \sigma $ has an expression in terms of Bessel functions as follows.  
\begin{align*}
\widehat {d \sigma } (x) = \int _{S ^{n-1}} e ^{- 2 \pi i x \cdot \xi } d \sigma (\xi ) 
= 2 \pi \lvert  x \rvert ^{\frac{2-n}2} J _{\frac{n-2}2} (2 \pi \lvert  x\rvert ).  
\end{align*}
The Bessel function has an expansion  \cite{MR1232192}*{Chap VIII.1.4} 
\begin{equation*}
J _{\frac{n-2}2} ( s ) = \sqrt {\tfrac{2 } {\pi s}} \cos \bigl(s -  \pi \tfrac{n-3}4  \bigr) + O (s ^{-3/2}), \qquad s\to \infty . 
\end{equation*}
It is preferable to write 
\begin{equation}\label{e:J}
J _{\frac{n-2}2} ( s ) =  \sqrt {\tfrac1{s}}[ e ^{+i s} a_+ (s) +  e ^{- is} a _{-} (s)],  \qquad s>0
\end{equation}
where 
\begin{equation} \label{e:a}
\bigl\lvert  \frac{d ^{m}} {ds ^{m}} a _{\pm} (s) \bigr\rvert  \lesssim [ 1+ s ] ^{-m}, \qquad m\in \mathbb N , \ s>0. 
\end{equation}
This follows from the asymptotic expansion of the Bessel functions.

Turning to our estimates for  $ K _{\tau } (x)$,   it is clear that $ \lvert  K _{\tau } ( x)\rvert 
\leq \lVert  \chi (\tau ^{-1} \lvert  \lvert  x\rvert -1\rvert) \rVert_1 \lesssim \tau $, since we are only estimating the volume of a thin annulus. Thus, we only need to consider $ \lvert  x\rvert \gtrsim 1 $ below.  
In terms of the Bessel function, we have 
\begin{equation} 
\begin{split}
K _{\tau } (x)   & = \int \chi (\tau ^{-1} \lvert  \lvert  \xi \rvert -1\rvert) \operatorname e ^{i x \cdot \xi } \,d \xi 
\\
& = n\int _{S ^{n-1}} \int _{0} ^{2} 
\chi (\tau ^{-1} \lvert  r -1\rvert) \operatorname e ^{i rx \cdot \xi } d \sigma (\xi)\,  r ^{n-1}\,dr 
\\
& =  2 n\pi \lvert  x \rvert ^{\frac{2-n}2}
\int_0 ^{2}  \chi (\tau ^{-1} \lvert r-1\rvert) r ^{ \frac{n}2} J _{\frac {n-2}2} (2 \pi r\lvert  x\rvert )\, dr
\end{split}
\end{equation}
Using \eqref{e:J}, this last expression is the sum of two terms, both of a similar nature.   The first term is 
\begin{equation*}
2 n\pi \lvert  x \rvert ^{\frac{1-n}2} 
\int_0 ^{2}  \chi (\tau ^{-1} \lvert r-1\rvert) r ^{ \frac{n-1}2} a _{+} (r \lvert  x\rvert ) e ^{i r \lvert  x\rvert } \, dr . 
\end{equation*}
The integral above is obviously dominated by $ \tau $,  and this is the estimate that we use for $ 1 \leq \lvert  x\rvert < \tau ^{-1 - \eta } $.  
For $ \lvert  x\rvert \geq \tau ^{-1- \eta } $, we can employ the standard integration by parts argument and the derivative conditions in \eqref{e:a}.  
\end{proof}

The decay condition in \eqref{e:K}  reveal that for fixed $ \tau $, we need not be concerned with the full complexity of the sparse bound. We can rather work with this modified definition.  Recall the  sparse form $\tilde \Lambda _{\tau, r,s }$ defined in  \eqref{e:til}, where we restrict cubes to have side length at least one, and no more than $ \tau ^{-1- \eta }$.  And for which we have the interpolation estimate of Proposition~\ref{p:interpolate}.  We define 
$ \lVert T : (r,s, \tau )\rVert$ to be the best constant $ C$ in the inequality 
\begin{equation}\label{e:TIL}
\lvert  \langle T f, g \rangle\rvert  
\leq C \tilde \Lambda _{\tau, r,s } (f,g),  
\end{equation}
the inequality holding uniformly over all bounded and compactly supported functions $ f,g$.

\begin{lemma}\label{l:main} 
Assume dimension $ n\geq 2$.  And let $ 1< p_0 < 2 $ be such that  the estimate \eqref{e:S} holds.  
These sparse bounds hold, for all $  0< \tau  , \eta < 1$. 
\begin{align}\label{e:main11}
\lVert S _{\tau } : (1,1, \tau )\rVert  & \lesssim   \tau ^{- \frac {n-1}2 - n \eta }, 
\\  \label{e:main1inft}
\lVert S _{\tau } : (1, \infty, \tau  )\rVert  & \lesssim   \tau ^{- \frac {n-1}2  - n \eta  }, 
\\ \label{e:mainpc}
\lVert S _{\tau } : (p_0,p_0', \tau )\rVert  & \lesssim  \tau ^{- \eta }
\end{align}
The implied constants depend upon $0< \eta <1$. 
\end{lemma}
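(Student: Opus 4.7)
All three estimates exploit the essentially single-scale nature of $S_\tau$: by Lemma~\ref{l:S}, the kernel $K_\tau$ is concentrated on a ball of radius $\sim\tau^{-(1+\eta)}$ with very rapid decay beyond. My plan is to tile $\mathbb{R}^n$ by dyadic cubes of an appropriate scale, estimate $|\langle S_\tau f, g\rangle|$ cube-by-cube via H\"older's inequality together with a kernel estimate, then sum. The essential support of $K_\tau$ lets us localize $f$ to a slight enlargement $CP$ of any tile $P$ of the relevant scale, with the tail contribution absorbed into the non-local averages~\eqref{e:ll}.

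For the $(1,\infty,\tau)$ and $(p_0,p_0',\tau)$ bounds I would tile by cubes of side $\tau^{-(1+\eta)}$. On each tile $P$, the pairing $\int_P |S_\tau f|\,|g| \le \|S_\tau f\|_{L^1(P)}\|g\|_{L^\infty(P)}$ combined with Lemma~\ref{l:S}(2)---which via the support condition in~\eqref{e:K} gives $\|K_\tau\|_1\lesssim \tau^{-(n-1)/2-\eta(n+1)/2}$---gives the $(1,\infty,\tau)$ bound, since $(n+1)/2\le n$. For $(p_0,p_0',\tau)$, pair $L^{p_0}$ against $L^{p_0'}$ on each tile and invoke the hypothesis~\eqref{e:S} with the $\epsilon$-loss set to $\eta$, yielding the constant $\lesssim\tau^{-\eta}$. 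In both cases the tail of $f$ outside $CP$ is controlled by the rapid-decay tail in~\eqref{e:K} and swallowed into the non-local averages.

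The $(1,1,\tau)$ bound is the delicate one: the naive single-tile estimate using $\|K_\tau\|_\infty\lesssim \tau$ at scale $\tau^{-(1+\eta)}$ only produces the constant $\tau\,|P|=\tau^{-(n-1)-n\eta}$, losing a factor $\tau^{-(n-1)/2}$. To recover this factor I would perform a dyadic annular decomposition $K_\tau=\sum_k K_\tau^{(k)}$ with $K_\tau^{(k)}$ supported on the shell $\{|x|\sim 2^k\}$, so $\|K_\tau^{(k)}\|_\infty\lesssim \tau\cdot 2^{-k(n-1)/2}$ by~\eqref{e:K}. For each $k$ with $1\le 2^k\le \tau^{-(1+\eta)}$ I would tile by cubes $\mathcal P_k$ of side $2^k$; since the support of $K_\tau^{(k)}$ forces $f$ to be localized to $CP$ when computing $S_\tau^{(k)} f$ on $P$, one has
\[
\int_P|S_\tau^{(k)} f|\,|g|\le \|S_\tau^{(k)} f\|_{L^\infty(P)}\|g\|_{L^1(P)}\lesssim \tau\cdot 2^{k(n+1)/2}\,|P|\,\langle\!\langle f\rangle\!\rangle_{P,1}\langle\!\langle g\rangle\!\rangle_{P,1}.
\]
Summing over $P\in\mathcal P_k$ and then over $k$, and using that the scale-$2^k$ pieces of the right-hand side assemble into (at most) $\tilde\Lambda_{\tau,1,1}(f,g)$, the total constant is $\max_k\tau\cdot 2^{k(n+1)/2}=\tau^{-(n-1)/2-\eta(n+1)/2}\le \tau^{-(n-1)/2-n\eta}$. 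Shells with $2^k>\tau^{-(1+\eta)}$ contribute $O(\tau^N)$ for any $N$ by the rapid-decay clause of~\eqref{e:K} and are negligible. The main obstacle is precisely this balance between the kernel-size gain $2^{-k(n-1)/2}$ and the cube-volume loss $2^{kn}$; the resulting exponent $(n+1)/2$ in the geometric sum is what yields the sharp $(n-1)/2$ savings over the naive estimate, and the choice $\eta_{\text{kernel}}=\eta_{\text{sparse}}$ is what makes the maximum of $C_k$ occur precisely at the cutoff scale.
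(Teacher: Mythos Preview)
Your proposal is correct and follows essentially the same route as the paper. For \eqref{e:mainpc} and \eqref{e:main1inft} you both localize to cubes of side $\tau^{-(1+\eta)}$ and invoke, respectively, the hypothesis \eqref{e:S} and the $L^1$ bound on $K_\tau$; for \eqref{e:main11} the paper dominates $|K_\tau|$ pointwise by $\tau^{-(n-1)/2}\sum_k 2^{-kn}\mathbf 1_{|x|<2^{k+1}}$ and treats each summand as an averaging operator at scale $2^k$, which is exactly your dyadic annular decomposition rephrased (your constants $C_k=\tau\cdot 2^{k(n+1)/2}$ and the paper's uniform constant $\tau^{-(n-1)/2}$ agree at the top scale $2^k\sim\tau^{-(1+\eta)}$, which is where the maximum occurs).
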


\begin{proof}
It is in the last condition \eqref{e:mainpc} that the hypothesis is important. Note that if $ f, g$ are supported on a cube $ Q$ of side length $ \tau ^{-1 - \eta  }$, then we have from the assumption that the Bochner-Riesz estimate \eqref{e:S} holds for $ p= p_0$, 
\begin{align*}
\lvert  \langle S _{\tau } f, g \rangle\rvert & \lesssim \tau ^{- \epsilon }
\lVert f\rVert _{p_0} \lVert g\rVert _{p_0'} \lesssim _{\epsilon } \langle f \rangle _{Q, p_0} \langle g \rangle _{Q, p_0'} \lvert  Q\rvert.   
\end{align*}
In view of the decay beyond the scale $ \tau ^{-1- \eta } $ in \eqref{e:K},  and the global form of the average in \eqref{e:til},  we can easily complete the proof of the claimed bound.  (And, we only need to use the dyadic cubes of scale $ \tau ^{-1 - \eta }  $, rather than the full range of scales in \eqref{e:til}.) 

\smallskip 
In a similar way, if $ f$ and $ g$ are supported on a cube of side length $ \tau ^{-1 - \eta }$, one can use the 
kernel decay in  \eqref{e:K} to see that 
\begin{align*}
\lvert  \langle S _{\tau } f, g \rangle\rvert &  
\leq \lVert K _{\tau } (x)  \rVert_1 \lVert f\rVert_1 \lVert g\rVert_ \infty 
\\
& \lesssim \tau ^{- \frac {n-1}2 - n \eta } \lVert f\rVert_1 \lVert g\rVert_ \infty  
\lesssim \tau ^{- \frac {n-1}2 - n \eta } \langle f \rangle _{Q,1} \langle g \rangle _{Q, \infty } \lvert  Q\rvert.  
\end{align*}
And from this, we see that \eqref{e:main1inft} holds.

\smallskip 

The case of \eqref{e:main11} is a little more involved,  and requires that we use all the scales in our modified sparse operator \eqref{e:til}, whereas the previous cases did not.  
Very briefly, we can dominate $ K _{\tau }$ by a  positive Calder\'on-Zygmund kernel, with 
Calder\'on-Zygmund norm at most $ \tau ^{- \frac {n-1}2- n \eta  }$. 
From this, and the known results for sparse domination of Calder\'on-Zygmund operators, the bounds \eqref{e:main11}   follow.  
To be more explicit, let $  \varphi = \mathbf 1_{\lvert  x\rvert <2 }$, 
and set $ \varphi _k (x) = 2 ^{-kn} \varphi (x 2 ^{-k})$. Then, we have 
\begin{equation*}
\lvert  K _{\tau } (x)  \mathbf 1_{\lvert  x\rvert <  \tau ^{-1 - \eta } }\rvert 
\lesssim  \tau ^{- \frac {n-1}2} \sum_{ k : 1\leq 2 ^{k} \leq  \tau ^{-1 - \eta }  } \varphi _ k (x). 
\end{equation*}
Convolution with  $ \varphi _{k}$ is an average on scale $ 2 ^{k}$, so that 
\begin{align*}
\Bigl\lvert 
\int \int K _{\tau } ( x-y) \mathbf 1_{\lvert  x-y\rvert <  \tau ^{-1 - \eta }  } 
f (y) g (x) \,dx \,dy
\Bigr\rvert
\lesssim \tau ^{-\frac {n-1}2}  \tilde \Lambda _{\tau , 1,1} (f,g).   
\end{align*}
But, the same bound holds for the remainder of the kernel $ K _{\tau }$, due to the decay estimates in \eqref{e:K}.  This completes the proof.

\end{proof}

\begin{proof}[Proof of Theorem~\ref{t:S}] 
We in fact show that for $ 0< \delta <1$ and   $ (\frac 1 r, \frac 1 s) \in \mathbf R (n, p_0, \delta )$,
\begin{equation} \label{e:YY}
\lVert  S _{\tau }  :  (r,s ,\tau )\rVert  \lesssim   \tau ^{- \delta - \eta } , \qquad 0< \tau , \eta < 1. 
\end{equation}
Above $ \delta $ is fixed, but $ \tau $ and $ \eta $ are allowed to vary. 
(The implied constant depends upon $ \eta $.) 
 This proves our Theorem, since for fixed  $ (\frac 1 r, \frac 1 s) \in \mathbf R (n, p_0, \delta )$, we have 
  $ (\frac 1 r, \frac 1 s) \in \mathbf R (n, p_0, \delta - \kappa )$, for a choice of $0< \kappa (r,s)< \delta $.  

The bounds in \eqref{e:YY} are self-dual and  can be interpolated, and so it suffices to verify the bounds above at the  vertexes $ v_1 = v _{n, \delta , 1}$ and $ v_2 = v _{n, \delta ,2 }$  of $ \mathbf R (n, p_0, \delta )$, as defined in \eqref{e:v}. 
But this is again an interpolation. To get the point $ v_1$, interpolate between the sparse bound \eqref{e:mainpc} and \eqref{e:main11}.  For the point $ v_2$, use \eqref{e:mainpc} and \eqref{e:main1inft}.  
See Figure~\ref{f:interpolate}. 

\end{proof}

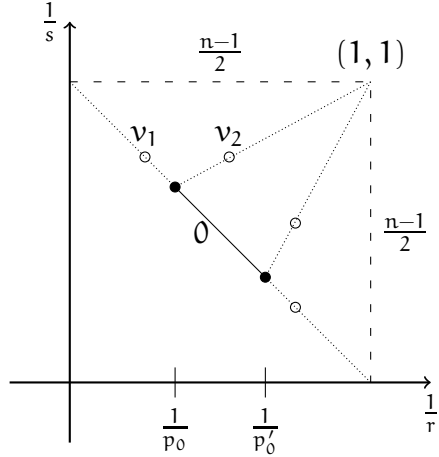
\begin{figure}
\begin{tikzpicture}[scale=4] 
\draw[thick,->] (-.2,0) -- (1.2,0) node[below] {$ \frac 1 r$};
\draw[thick,->] (0,-.2) -- (0,1.2) node[left] {$ \frac 1 s$};
\filldraw  (.35,.65) circle (.04em); \filldraw  (.65,.35) circle (.04em); 
\draw  (.35,.65) -- (.65,.35)  node[midway, left] {$ 0$}; 
\draw (.35,.05) -- (.35,-.05) node[below] {$ \tfrac 1 {p_0}$};
\draw (.65,.05) -- (.65,-.05) node[below] {$ \tfrac 1 {p_0'}$};
\foreach \position in { (1,0), (1,1) } 
    \draw[densely dotted] \position -- (.65,.35) ; 
\foreach \position in { (0,1), (1,1) } 
    \draw[densely dotted] \position -- (.35,.65) ; 
\draw[loosely dashed] (0,1) -- (1,1) node[midway, above] {$ \tfrac {n-1}2$}
node[above] {$ (1,1)$} -- (1.,0) node[midway, right] {$ \tfrac {n-1}2$}; 

\foreach \position in { (.75,.25), (.75,.53), (.25,.75), (.53,.75)} 
\draw \position circle (.04em);
\draw (.25,.75) node[above] {$ v_1$}; 
\draw (.53,.75) node[above] {$ v_2$}; 
\end{tikzpicture}

\caption{The interpolation argument for the sparse bounds. We have the sparse bounds $ (\frac 1 {p_0},  \frac1{p_0'})$, $ (1,0)$, and $ (1,1)$ as well as their duals.  The sparse bound at $ (\frac 1 {p_0},  \frac1{p_0'})$ is, up to logarithmic terms, uniformly bounded in $ 0< \tau < \frac12 $, while the others are bounded by $ \tau ^{- \frac {n-1}2}$. Interpolation, along the dotted lines, to the circles, yields sparse bounds with growth $ \tau ^{- \delta }$, for $ 0< \delta < \frac {n-1}2$.} 
\label{f:interpolate}
\end{figure}


\section{Sharpness of the sparse bounds}

We discuss sharpness of the sparse bounds in Theorem \ref{t:higher}.  Recalling that $p(\delta)$ is the critical index for the Bochner-Riesz operator $B_ \delta$, 
we cannot have any sparse bound $(r,s)$, where $1\leq r < p_ \delta$, as that would imply the boundedness of $B_\delta$ on $L^p$, for $r< p < p(\delta)$.

It remains to show sharpness of the $(r,s)$ sparse bound when 
$ p_\delta < r, s < p_\delta'$.  
This follows from a standard example. 
We work in dimensions $n\geq 2$, 
 Consider the rectangles $R$ and $\widetilde{R}$ defined by 
$$
R = \left[\tfrac {-1} \lambda , \tfrac {1} \lambda \right] \times \left[\tfrac {-c}{ \sqrt{\lambda} }, \tfrac {c}{ \sqrt{\lambda}} \right]^{n-1}; \qquad \widetilde{R} = R + \tfrac{1}{\lambda} (1,0,\ldots, 0).
$$
Above $0< c < 1$ is a small dimensional constant. 
Define the functions 
$$
f(x) = e^{i |x|} \mathbf 1_R(x), \quad  g(x) = e^{-i |x|} \mathbf 1_{\widetilde R}(x).   
$$ 
Using well known asymptotic estimates for the Bochner Riesz kernel, we have 
\begin{align}
\lvert  \langle B_{\delta} f , g \rangle \rvert   & \simeq  \Bigl\lvert  \int_{\widetilde R} \int_R \frac{e^{i (|x-y| -|x| +|y|)} } {(1+|x-y|)^{\frac{n-1}{2}+\delta}} \, dy \, dx  \Bigr\rvert
\\  \label{e:B1}
& \simeq 
\lvert R\rvert ^2 \lambda^{ \frac {n-1}2 + \delta } \simeq  \lambda^{ -\frac {n+1}2 + \delta }
\end{align}
The kernel estimates we are referencing are analogs of \eqref{e:J}, which has two exponential terms in it.
Above, one can directly verify that the phase function satisfies 
\begin{equation}
\bigl\lvert |x-y| -|x| +|y| \bigr\rvert \lesssim c, \qquad  x\in \widetilde{R},\ y\in R.  
\end{equation}
This leads to the estimate above.  
There is a  second exponential term  with phase function 
\begin{equation}
-|x-y| -|x| +|y| \simeq -2 |x-y|  , \qquad  x\in \tilde{R},\ y\in R.
\end{equation}
So, that term has substantial cancellation.

Recall that the largest value of  sparse form $\Lambda _{\mathcal S, r,s}(f,g)$  is obtained by a single sparse form. For the functions $f,g$ above, it is clear that this largest form is obtained by taking $\mathcal S$ to consist of only the smallest cube $Q$ that contains the support of both $f$ and $g$.  
 That cube has  $\ell Q \simeq \lambda^{-1}$.  And then, 
\begin{equation} \label{e:B2}
|Q| \langle f \rangle_{Q,r} \langle g \rangle_{Q,s} \simeq \lambda^{-n + \frac{n-1}{2}(\frac{1}{r} + \frac{1}{s})}.
\end{equation}

We see that the $(r,s)$ sparse bound for $B_\delta$ implies that  \eqref{e:B1} should be less than  \eqref{e:B2} for 
all  small $\lambda$.
By comparing exponents, we see that 
\begin{equation*}
- \tfrac {n+1}2 + \delta \geq 
-n + \tfrac{n-1}{2}(\tfrac{1}{r} + \tfrac{1}{s}). 
\end{equation*}
The case of equality above is the line that defines the top of the trapezoid $R(n,p(\delta),\delta)$, as is verified by inspection.


\section{Weighted Consequences} 
 
 The sparse bounds imply vector-valued and weighted inequalities for the Bochner-Riesz multipliers. 
The weights allowed are in the intersection of certain Muckenhoupt and reverse H\"older classes. 
The inequalities we can deduce are strongest at the vertex $ v _{n, \delta ,2}$, using the notation of \eqref{e:v}. 
Indeed, the weighted consequence is the strongest known for the Bochner-Riesz multipliers. 
The method of deduction follows the model of arguments in \cite{160506401}* {\S 7} and  \cite {MR3531367}*{\S 6}, as well as \cite{2017arXiv170208594L}* {\S 6}. We tread lightly around the details.  

 It is also of interest to obtain weighted bounds that more explicitly involve the Kakeya maximal function, as is done by Carbery \cite {MR768732} and Carbery and Seeger \cite{MR1765787}. We leave to the future to obtain sparse variants of these latter results.

 Recall that a weight $ w$ is in the Muckenhoupt $ A_p$ class if it has a   density $ w (dx)= w (x) dx$, with $ w (x) >0$, which is locally integrable, and $ \sigma (x) = w (x) ^{- \frac 1 {p-1}}$ is also locally integrable, and 
 \begin{equation}\label{e:Ap}
 [w] _{A_p} = \sup _{Q}  \langle w \rangle _{Q} \langle \sigma  \rangle_Q ^{p-1} < \infty , 
 \end{equation}
 where $ w (Q) = \int _{Q} w (dx)$, and the supremum is over all cubes.  We use the standard extension to $ p=1$, namely 
 \begin{equation*}
 [w] _{A_1} = \Bigl\lVert  \frac  { M w} w  \Bigr\rVert _{\infty }
 \end{equation*}
 We set $ A _{p} ^{\rho } = \{w ^{\rho } : w\in A_p\}$.  
 
 We will set $ B _{ \delta , p} $ to be the class of weights $ w$ such that we have the inequality 
 \begin{equation}\label{e:Bp}
 \lVert B _{\delta } \rVert _{L ^{p} (w) \mapsto L ^{p} (w)}. 
 \end{equation}
 Below, we will focus on qualitative results. All results can be made entirely quantitative, but given the incomplete information that we have the Bochner-Riesz conjecture, or even the full range of sparse bounds in two dimensions, we do not pursue the quantitative bounds at this time.  
 
 The best known results concerning the weighted estimates for the Bochner-Riesz multipliers in the category of $ A_p$ classes,  are below. 
 We emphasize that some of these hold for the maximal Bochner-Riesz multiplier, which we are not considering in this paper. 
 
 \begin{priorResults}\label{t:wbr} 
 \begin{enumerate}
 \item  (Christ, \cite{MR796439})  We have the inclusion below valid in all $ n\geq 2$. 
 \begin{equation}\label{e:christ}
 A _{1} ^{\frac {1+2 \delta }n} \subset B   _{\delta , 2}, \qquad   \frac {n-1} {2 (n+1)} < \delta < \frac {n-1}2. 
 \end{equation}
 
 \item (Carro, Duoandikoetxea, Lorente \cite{MR3085616}) We have the inclusion below, valid in all dimensions $ n\geq 2$.  
 \begin{equation}\label{e:cdl}
 A _{2} ^{\frac {2 \delta } {n-1} } \subset B  _{ \delta ,2}.  
 \end{equation}
 \end{enumerate}
 \end{priorResults}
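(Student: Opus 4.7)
The plan is to derive both inclusions from the sparse bounds established in Theorem~\ref{t:higher} via the standard sparse-to-weighted transfer, following the templates in \cite{160506401}*{\S7}, \cite{MR3531367}*{\S6}, and \cite{2017arXiv170208594L}*{\S6}.

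The abstract input is the now-standard theorem that an $(r,s)$-sparse bound for a sublinear operator $T$ implies
\begin{equation*}
\lVert T \rVert_{L^p(w)\to L^p(w)} < \infty, \qquad r < p < s', \ w \in A_{p/r} \cap RH_{(s'/p)'}.
\end{equation*}
I set $p=2$ and use the classical factorization $A_{\alpha}\cap RH_{\beta}=\{v^{\beta}:v\in A_{\beta(\alpha-1)+1}\}$ to rewrite the admissible weight class in the form $A_q^{\rho}$ for a pair $(q,\rho)$ determined by the sparse indices $(r,s)$. Theorem~\ref{t:higher} produces a one-parameter family of admissible $(r,s)$ along the upper edge of $\mathbf{R}(n,p_0,\delta)$, parameterized by $p_0$ in the admissible restriction range, so I have room to optimize.

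To recover Christ's inclusion \eqref{e:christ}, I pick a sparse pair on the upper edge with $r=2$, which forces $A_{p/r}=A_1$; the other coordinate is then determined by $p_0$, and a direct substitution using \eqref{e:v} identifies the reverse-H\"older exponent $(s'/2)'$ as $n/(1+2\delta)$ for the appropriate choice of $p_0$. This yields $A_1\cap RH_{n/(1+2\delta)}=A_1^{(1+2\delta)/n}$, and the requirement that the chosen pair actually lie strictly inside $\mathbf{R}(n,p_0,\delta)$ reproduces the lower threshold $\delta>(n-1)/(2(n+1))$ in \eqref{e:christ}. For \eqref{e:cdl}, I instead select the sparse pair on the upper edge lying on the duality diagonal $r=s$; at this symmetric point $p/r=p/s'$ simultaneously at $p=2$, so the class becomes $A_2\cap RH_{\beta}=\{v^{\beta}:v\in A_2\}$, and the parallel computation using \eqref{e:v} fixes $\beta=(n-1)/(2\delta)$, producing $A_2^{2\delta/(n-1)}$.

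The main obstacle is purely bookkeeping: translating the vertex formulas in \eqref{e:v} into the $A_{p/r}\cap RH_{(s'/p)'}$ indices at $p=2$, and verifying that the resulting pair $(q,\rho)$ hits $(1,(1+2\delta)/n)$ and $(2,2\delta/(n-1))$ for admissible $p_0$. No new harmonic-analytic input is required; both inclusions follow formally from the sparse bounds once the algebra of indices is carried out, and this is why the authors feel justified in suppressing the details.
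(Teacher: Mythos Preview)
This statement is a \emph{priorResults} environment: it records theorems of Christ and of Carro--Duoandikoetxea--Lorente that the paper \emph{cites} as background, not results that the paper proves. The paper offers no proof of part~(1) at all, and for part~(2) it only gives a one-sentence remark: the inclusion \eqref{e:cdl} follows from the sparse bound at the diagonal point obtained by interpolating the \emph{trivial} $(2,2,\tau)$ bound against the $(1,1,\tau)$ bound of Lemma~\ref{l:main}, with no appeal to Theorem~\ref{t:higher} or to any $p_0$; the authors' point is precisely that \eqref{e:cdl} ``has little to do with the Bochner--Riesz operators.'' Your proposal instead routes everything through the trapezoid $\mathbf R(n,p_0,\delta)$ of Theorem~\ref{t:higher}, which is a genuinely different (and heavier) path.

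For part~(2) your diagonal argument is morally the same as the paper's remark, just phrased in terms of the trapezoid. For part~(1), however, there is a real gap. Carrying out the bookkeeping you outline, the sparse pair needed for Christ's exponent is $(\tfrac12,\tfrac{n+1+2\delta}{2n})$, and solving for the $p_0$ that places this on the upper edge of $\mathbf R(n,p_0,\delta)$ gives exactly $p_0=\tfrac{2n}{n+1}$. This is the \emph{excluded endpoint} of the range in Conjecture~\ref{j:S} (the strict inequality $n\lvert\tfrac1p-\tfrac12\rvert<\tfrac12$ fails with equality there), and for any admissible $p_0>\tfrac{2n}{n+1}$ the target point lies \emph{outside} the open trapezoid. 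So Theorem~\ref{t:higher} as stated does not deliver the sparse bound you need, and the ``appropriate choice of $p_0$'' you invoke is not actually available. The threshold $\delta>\tfrac{n-1}{2(n+1)}$ that you recover is the condition for the target to lie on the closed edge at the endpoint $p_0$, not a condition that places it strictly inside for some admissible $p_0$. In short: the paper does not attempt to derive Christ's inclusion from its sparse bounds, and your proposed derivation falls just outside what Theorem~\ref{t:higher} provides.
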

 
 The second result is a consequence of the $ (\frac 12 + \frac {2 \delta } {n-1}, \frac 12 + \frac {2 \delta } {n-1})$ 
 sparse bound.  
 This latter sparse bound can be deduced from the the (trivial) $ (2,2, \tau )$ and    $(1,1, \tau )$  sparse bounds, as defined in Lemma~\ref{l:main}.  
 That is, \eqref{e:cdl} has little to do with the Bochner-Riesz operators. (The authors of \cite{MR3085616} note a similar argument.)

 We are able to deduce this improvement of \eqref{e:christ}, in that it applies for all $ 0< \delta < \frac {n-1}2$, 
 and increases the integrability of the Bochner-Riesz bound.  Finally, it approximates the known estimate at the critical index, see Theorem~\ref{t:critical}, and the earlier result of Vargas \cite{MR1405057}. 
 
 \begin{theorem}\label{t:w}
  In all dimensions $ n\geq 2$, using the notation of \eqref{e:v},   write the vertex $ v _{n, \delta ,2} = (\frac1{r}, \frac1{s })$.  We have 
 \begin{equation}\label{e:A1}
A_1 ^{ \frac{s- p (s-1)}s} \cdot   A _1 ^{1-\frac{p}r} \subset B _ {\delta ', p  }, \qquad 0< \delta < \delta ' < \tfrac{n-1}2, \ r < p < s'. 
 \end{equation}
 In particular, for $ n=2$, we have the explicit value $ v _{2, \delta , 2}=  ( \tfrac {1+6 \delta }4 , \tfrac {3 +2 \delta }4  )$, and 
 \begin{equation}\label{e:rho}
 A _{1} ^{ 1 - p \frac{1-2 \delta }4 } \cdot 
 A_1 ^{1- p \frac{1+6 \delta }4} 
 \subset B _ {\delta ', p}   , \qquad 0< \delta < \delta '< \tfrac{1}2, \  \tfrac 4{1+6 \delta }< p < \tfrac4{1-2 \delta }. 
 \end{equation}
 \end{theorem}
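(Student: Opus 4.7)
The plan is to combine the sparse bound from Theorem~\ref{t:higher} at the vertex $v_{n,\delta,2}$ with the standard deduction of weighted inequalities from sparse bounds, and then to repackage the resulting weight class through a Jones--Johnson--Neugebauer factorization into the $A_1^{\alpha_1}\cdot A_1^{\alpha_2}$ form appearing in \eqref{e:A1}.

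First, I would apply Theorem~\ref{t:higher} to $B_{\delta'}$ at the vertex $v_{n,\delta,2}=(\tfrac{1}{r},\tfrac{1}{s})$. Since $\delta<\delta'$ and the open trapezoid $\mathbf{R}(n,p_0,\cdot)$ strictly enlarges with its third argument, the point $v_{n,\delta,2}$ lies in $\mathbf{R}(n,p_0,\delta')$, so $\lVert B_{\delta'}:(r,s)\rVert<\infty$. The separation $\delta'-\delta>0$ is exactly what supplies the needed openness and avoids any endpoint losses.

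Second, I would invoke the now-routine passage from an $(r,s)$-sparse bound to weighted norm inequalities, modelled on \cite{MR3531367}*{\S6}, \cite{160506401}*{\S7}, and \cite{2017arXiv170208594L}*{\S6}. The underlying principle is that any sublinear operator $T$ with $\lVert T:(r,s)\rVert<\infty$ extends to a bounded map $L^p(w)\to L^p(w)$ whenever $r<p<s'$ and $w$ lies in the composite Muckenhoupt / reverse H\"older class $A_{p/r}\cap RH_{(s'/p)'}$. The proof dualises $L^p(w)$ against $L^{p'}(\sigma)$ with $\sigma=w^{1-p'}$, inserts the weights into \eqref{e:sparse_def} by H\"older on each sparse cube, and closes via a Carleson embedding controlled by the joint $A_{p/r}\cap RH_{(s'/p)'}$ characteristic. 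Applied to $T=B_{\delta'}$, this produces the required $L^p(w)$ bound.

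Third, I would translate the class $A_{p/r}\cap RH_{(s'/p)'}$ into the factored form using a Jones-type theorem that expresses every such weight as a product $u^{\alpha_1}v^{\alpha_2}$ with $u,v\in A_1$. A direct computation of the factorisation exponents, in which the $A_{p/r}$ constraint and the reverse H\"older constraint are each absorbed into the correct factor, yields $\alpha_1=1-p/s'=(s-p(s-1))/s$ and $\alpha_2=1-p/r$, reproducing \eqref{e:A1}. Specialising to $n=2$ with the explicit coordinates $v_{2,\delta,2}=(\tfrac{1+6\delta}{4},\tfrac{3+2\delta}{4})$ recovers \eqref{e:rho}. The main technical hurdle lies not in the Bochner--Riesz analysis at all but in the bookkeeping of this last step: one must verify that the Jones exponents arising jointly from the $A_{p/r}$ and $RH_{(s'/p)'}$ conditions line up precisely with $1-p/r$ and $(s-p(s-1))/s$. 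This is routine once the correct form of the factorisation theorem is identified, and is the only delicate point in the argument.
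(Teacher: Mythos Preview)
Your proposal is correct and follows essentially the same route as the paper: obtain the $(r,s)$ sparse bound for $B_{\delta'}$ at $v_{n,\delta,2}$ via Theorem~\ref{t:higher} and the openness $\delta<\delta'$, then invoke \cite{MR3531367}*{Prop.~6.4} to pass to $L^p(w)$ bounds for $w\in A_{p/r}\cap RH_{(s'/p)'}$, and finally use the classical identity $A_{p/r}\cap RH_{(s'/p)'}=A_1^{1/(s'/p)'}\cdot A_1^{1-p/r}$ together with the computation $1/(s'/p)'=(s-p(s-1))/s$. The paper's proof is terser---it does not spell out the openness step you make explicit---but the argument is the same.
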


 This contrasts to \cite{160506401}* {Theorem 14}, which is restrictive in the range of $ 0< \delta < \frac{n-1}2$ that are allowed.  (See \cite{160506401}*{Corollary 16} for an example of the kind of vector-valued consequences that can be derived.) 
 We use the vertex $ v _{n, \delta,2 }$,  as it is the strongest sparse bound we have.  The proof is however elementary. We have this known proposition. 
 
 \begin{proposition}\label{p:one_half} If a linear operator $ T$ satisfies a $ (r , s)$ sparse bound, with $ 1\leq s < r $, we then have 
 \begin{equation*}
 \lVert T  : L ^p (w ^{\rho }) \mapsto L ^p (w ^{\rho }) \rVert  < \infty , \qquad w\in A_1 ^{ \frac{s- p (s-1)}s}   A _1 ^{1-\frac{p}r},  
 \qquad  r< p < s'. 
 \end{equation*}
 \end{proposition}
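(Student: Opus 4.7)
The plan is to combine duality, the sparsity property of the collection, and Jones factorization of Muckenhoupt weights. I would first dualize: setting $\sigma = (w^\rho)^{1-p'}$, it suffices to bound $\lvert\langle Tf,g\rangle\rvert$ by $\lVert f\rVert_{L^p(w^\rho)}\lVert g\rVert_{L^{p'}(\sigma)}$. The $(r,s)$ sparse hypothesis together with Proposition~\ref{p:One} produces a single sparse collection $\mathcal S$ for which
\begin{equation*}
\lvert\langle Tf,g\rangle\rvert \lesssim \sum_{Q\in\mathcal S}\lvert Q\rvert\,\langle f\rangle_{Q,r}\,\langle g\rangle_{Q,s}.
\end{equation*}
Using $\lvert Q\rvert \leq 4\lvert E_Q\rvert$ and the pairwise disjointness of $\{E_Q\}$, the right-hand side is pointwise-dominated by $\int M_{r,\mathcal S}f \cdot M_{s,\mathcal S}g\,dx$, where $M_{t,\mathcal S}h(x) := \sup\{\langle h\rangle_{Q,t} : Q\in\mathcal S,\,x\in Q\}$.

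Next, a single application of H\"older's inequality with exponents $p,p'$, paired with the identity $-p'/p = 1-p'$, yields
\begin{equation*}
\int M_{r,\mathcal S}f \cdot M_{s,\mathcal S}g\,dx \leq \lVert M_{r,\mathcal S}f\rVert_{L^p(w^\rho)}\,\lVert M_{s,\mathcal S}g\rVert_{L^{p'}(\sigma)}.
\end{equation*}
Since $M_{t,\mathcal S}h \leq (M(\lvert h\rvert^t))^{1/t}$ for the Hardy--Littlewood maximal operator $M$, the two factors on the right are controlled by $\lVert f\rVert_{L^p(w^\rho)}$ and $\lVert g\rVert_{L^{p'}(\sigma)}$ respectively, provided $w^\rho \in A_{p/r}$ and $\sigma \in A_{p'/s}$. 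Both conditions are meaningful exactly because $r < p < s'$.

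It then remains to verify those two Muckenhoupt conditions from the hypothesis, via the Jones factorization $A_q = A_1\cdot A_1^{1-q}$. Writing the weight as $v_1^\alpha v_2^\beta$ with $v_1,v_2 \in A_1$, $\alpha = (s-p(s-1))/s = 1-p/s'$, $\beta = 1-p/r$, the constraint $r < p < s'$ forces $\alpha \in (0,1)$, so $v_1^\alpha \in A_1$; hence $w^\rho \in A_1 \cdot A_1^{1-p/r} = A_{p/r}$. For the dual weight, I would raise each factor to the power $1-p'$ and compute: $\alpha(1-p') = 1 - p'/s$, so $v_1^{\alpha(1-p')} \in A_1^{1-p'/s}$; while $\beta(1-p') = (p-r)/(r(p-1)) \in (0,1]$ (the upper bound using $r \geq 1$), so $v_2^{\beta(1-p')} \in A_1$. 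Jones factorization $A_{p'/s} = A_1 \cdot A_1^{1-p'/s}$ then delivers $\sigma \in A_{p'/s}$.

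The main obstacle is simply the exponent bookkeeping in this last step: one must track which powers of $A_1$ weights again land in $A_1$ (those with exponent in $[0,1]$) and which slot into the Jones factorization of the higher $A_q$ class on the dual side. The critical calculation is $\alpha(1-p') = 1 - p'/s$, which pairs the first factor of $w$ with the dual Muckenhoupt condition and makes the scheme close.
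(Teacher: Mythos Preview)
Your argument is correct and complete. It differs from the paper's proof mainly in being self-contained: the paper simply invokes \cite{MR3531367}*{Prop.~6.4}, which states that an $(r,s)$ sparse bound implies $L^p(w)$ boundedness whenever $w\in A_{p/r}\cap RH_{(s'/p)'}$, and then quotes the classical identity $A_{p/r}\cap RH_{(s'/p)'}=A_1^{1/(s'/p)'}\cdot A_1^{1-p/r}$, checking that $1/(s'/p)'=(s-p(s-1))/s$. You instead unpack the cited result by hand---the Carleson embedding via the sets $E_Q$, the pointwise domination by $M_{r,\mathcal S}\cdot M_{s,\mathcal S}$, and H\"older with the weight---and then verify the two Muckenhoupt conditions $w\in A_{p/r}$, $\sigma\in A_{p'/s}$ directly from Jones factorization rather than passing through the reverse H\"older class. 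The substance is the same (your maximal-function step is exactly what underlies \cite{MR3531367}*{Prop.~6.4}, and your Jones computation is the content of the classical identity the paper quotes), but your version has the virtue of being entirely explicit, at the cost of the exponent bookkeeping you flag at the end. The key identity $\alpha(1-p')=1-p'/s$ is correct and is precisely what makes the dual weight land in the right class.
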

 
 \begin{proof}[Proof of Theorem \ref{t:w}] 
 It is a consequence of \cite{MR3531367}*{Prop. 6.4}, that the sparse bound assumption implies that 
 \begin{equation*}
 \lVert T  : L ^p(w  ) \mapsto L ^p  (w ) \rVert  < \infty  , \qquad r< p < s' 
 \end{equation*}
 provided the weight $ w$ is in 
 \begin{equation*}
 w \in A _{ \frac p {r}} \cap RH _{ (s'/p)'} = A _{1 } ^{ 1/ (s'/p)'} \cdot A _1 ^{1-\frac{p}r}
 \end{equation*}
 Above, $ RH _{\rho }$ denotes the reverse H\"older class of weights of index $ 1  < \rho < \infty  $, and the 
 equality above is classical.  By direct calculation, $  1/ (s'/p)'= \frac {s- p (s-1)}s$. 
 \end{proof}

\bibliographystyle{alpha,amsplain}	

\begin{bibdiv}
\begin{biblist}

\bib{MR1371114}{article}{
      author={Bak, Jong-Guk},
       title={Sharp estimates for the {B}ochner-{R}iesz operator of negative
  order in {${\bf R}^2$}},
        date={1997},
        ISSN={0002-9939},
     journal={Proc. Amer. Math. Soc.},
      volume={125},
      number={7},
       pages={1977\ndash 1986},
         url={http://dx.doi.org/10.1090/S0002-9939-97-03723-4},
      review={\MR{1371114}},
}

\bib{160506401}{article}{
      author={Benea, Cristina},
      author={Bernicot, Fr\'ed\'eric},
      author={Luque, Teresa},
       title={Sparse bilinear forms for bochner riesz multipliers and
  applications},
        date={2017},
        ISSN={2052-4986},
     journal={Transactions of the London Mathematical Society},
      volume={4},
      number={1},
       pages={110\ndash 128},
         url={http://dx.doi.org/10.1112/tlm3.12005},
}

\bib{MR3531367}{article}{
      author={Bernicot, Fr{\'e}d{\'e}ric},
      author={Frey, Dorothee},
      author={Petermichl, Stefanie},
       title={Sharp weighted norm estimates beyond {C}alder\'on-{Z}ygmund
  theory},
        date={2016},
        ISSN={2157-5045},
     journal={Anal. PDE},
      volume={9},
      number={5},
       pages={1079\ndash 1113},
  url={http://dx.doi.org.prx.library.gatech.edu/10.2140/apde.2016.9.1079},
      review={\MR{3531367}},
}

\bib{MR2860188}{article}{
      author={Bourgain, Jean},
      author={Guth, Larry},
       title={Bounds on oscillatory integral operators based on multilinear
  estimates},
        date={2011},
        ISSN={1016-443X},
     journal={Geom. Funct. Anal.},
      volume={21},
      number={6},
       pages={1239\ndash 1295},
  url={http://dx.doi.org.prx.library.gatech.edu/10.1007/s00039-011-0140-9},
      review={\MR{2860188}},
}

\bib{MR768732}{article}{
      author={Carbery, Anthony},
       title={A weighted inequality for the maximal {B}ochner-{R}iesz operator
  on {${\bf R}^2$}},
        date={1985},
        ISSN={0002-9947},
     journal={Trans. Amer. Math. Soc.},
      volume={287},
      number={2},
       pages={673\ndash 680},
      review={\MR{768732}},
}

\bib{MR1765787}{article}{
      author={Carbery, Anthony},
      author={Seeger, Andreas},
       title={Weighted inequalities for {B}ochner-{R}iesz means in the plane},
        date={2000},
        ISSN={0033-5606},
     journal={Q. J. Math.},
      volume={51},
      number={2},
       pages={155\ndash 167},
      review={\MR{1765787}},
}

\bib{MR0361607}{article}{
      author={Carleson, Lennart},
      author={Sj\"olin, Per},
       title={Oscillatory integrals and a multiplier problem for the disc},
        date={1972},
        ISSN={0039-3223},
     journal={Studia Math.},
      volume={44},
       pages={287\ndash 299. (errata insert)},
        note={Collection of articles honoring the completion by Antoni Zygmund
  of 50 years of scientific activity, III},
      review={\MR{0361607}},
}

\bib{MR3085616}{article}{
      author={Carro, Mar\'ia~J.},
      author={Duoandikoetxea, Javier},
      author={Lorente, Mar\'ia},
       title={Weighted estimates in a limited range with applications to the
  {B}ochner-{R}iesz operators},
        date={2012},
        ISSN={0022-2518},
     journal={Indiana Univ. Math. J.},
      volume={61},
      number={4},
       pages={1485\ndash 1511},
      review={\MR{3085616}},
}

\bib{MR796439}{article}{
      author={Christ, Michael},
       title={On almost everywhere convergence of {B}ochner-{R}iesz means in
  higher dimensions},
        date={1985},
        ISSN={0002-9939},
     journal={Proc. Amer. Math. Soc.},
      volume={95},
      number={1},
       pages={16\ndash 20},
      review={\MR{796439}},
}

\bib{2016arXiv161209201C}{article}{
      author={Conde-Alonso, Jos\'e~M.},
      author={Culiuc, Amalia},
      author={Di~Plinio, Francesco},
      author={Ou, Yumeng},
       title={A sparse domination principle for rough singular integrals},
        date={2017},
        ISSN={2157-5045},
     journal={Anal. PDE},
      volume={10},
      number={5},
       pages={1255\ndash 1284},
         url={http://dx.doi.org/10.2140/apde.2017.10.1255},
      review={\MR{3668591}},
}

\bib{MR544242}{article}{
      author={C\'ordoba, A.},
       title={A note on {B}ochner-{R}iesz operators},
        date={1979},
        ISSN={0012-7094},
     journal={Duke Math. J.},
      volume={46},
      number={3},
       pages={505\ndash 511},
  url={http://projecteuclid.org.prx.library.gatech.edu/euclid.dmj/1077313571},
      review={\MR{544242}},
}

\bib{MR0447949}{article}{
      author={Cordoba, Antonio},
       title={The {K}akeya maximal function and the spherical summation
  multipliers},
        date={1977},
        ISSN={0002-9327},
     journal={Amer. J. Math.},
      volume={99},
      number={1},
       pages={1\ndash 22},
      review={\MR{0447949}},
}

\bib{160305317}{article}{
      author={{Culiuc}, A.},
      author={{Di Plinio}, F.},
      author={{Ou}, Y.},
       title={{Domination of multilinear singular integrals by positive sparse
  forms}},
        date={2016-03},
     journal={ArXiv e-prints},
      eprint={1603.05317},
}

\bib{2016arXiv161208881C}{article}{
      author={{Culiuc}, A.},
      author={{Kesler}, R.},
      author={{Lacey}, M.~T.},
       title={{Sparse Bounds for the Discrete Cubic Hilbert Transform}},
        date={2016-12},
     journal={ArXiv e-prints},
      eprint={1612.08881},
}

\bib{MR1800316}{book}{
      author={Duoandikoetxea, Javier},
       title={Fourier analysis},
      series={Graduate Studies in Mathematics},
   publisher={American Mathematical Society, Providence, RI},
        date={2001},
      volume={29},
        ISBN={0-8218-2172-5},
        note={Translated and revised from the 1995 Spanish original by David
  Cruz-Uribe},
      review={\MR{1800316}},
}

\bib{MR1641626}{article}{
      author={Guti\'errez, Susana},
       title={A note on restricted weak-type estimates for {B}ochner-{R}iesz
  operators with negative index in {${\bf R}^n,\ n\ge2$}},
        date={2000},
        ISSN={0002-9939},
     journal={Proc. Amer. Math. Soc.},
      volume={128},
      number={2},
       pages={495\ndash 501},
         url={http://dx.doi.org/10.1090/S0002-9939-99-05144-8},
      review={\MR{1641626}},
}

\bib{2017arXiv170204569H}{article}{
      author={{Hyt{\"o}nen}, T.},
      author={{Petermichl}, S.},
      author={{Volberg}, A.},
       title={{The sharp square function estimate with matrix weight}},
        date={2017-02},
     journal={ArXiv e-prints},
      eprint={1702.04569},
}

\bib{170705844}{article}{
      author={{Kesler}, R.},
      author={{Lacey}, M.~T.},
       title={{Sparse Endpoint Estimates for Bochner-Riesz Multipliers on the
  Plane}},
        date={2017-07},
     journal={ArXiv e-prints},
      eprint={1707.05844},
}

\bib{160908701}{article}{
      author={{Krause}, Ben},
      author={{Lacey}, Michael~T.},
       title={{Sparse Bounds for Random Discrete Carleson Theorems}},
        date={2016-09},
     journal={ArXiv e-prints},
      eprint={1609.08701},
}

\bib{2017arXiv170208594L}{article}{
      author={{Lacey}, M.~T.},
       title={{Sparse Bounds for Spherical Maximal Functions}},
        date={2017-02},
     journal={ArXiv e-prints},
      eprint={1702.08594},
}

\bib{MR3625108}{article}{
      author={Lacey, Michael~T.},
       title={An elementary proof of the {$A_2$} bound},
        date={2017},
        ISSN={0021-2172},
     journal={Israel J. Math.},
      volume={217},
      number={1},
       pages={181\ndash 195},
  url={http://dx.doi.org.prx.library.gatech.edu/10.1007/s11856-017-1442-x},
      review={\MR{3625108}},
}

\bib{161001531}{article}{
      author={Lacey, Michael~T.},
      author={Mena~Arias, Dar\'\i~o},
       title={The sparse {T}1 theorem},
        date={2017},
        ISSN={0362-1588},
     journal={Houston J. Math.},
      volume={43},
      number={1},
       pages={111\ndash 127},
      review={\MR{3647935}},
}

\bib{160906364}{article}{
      author={{Lacey}, Michael~T.},
      author={Spencer, Scott},
       title={{Sparse Bounds for Oscillatory and Random Singular Integrals}},
        date={2016-09},
     journal={ArXiv e-prints},
      eprint={1609.06364},
}

\bib{MR2046812}{article}{
      author={Lee, Sanghyuk},
       title={Improved bounds for {B}ochner-{R}iesz and maximal
  {B}ochner-{R}iesz operators},
        date={2004},
        ISSN={0012-7094},
     journal={Duke Math. J.},
      volume={122},
      number={1},
       pages={205\ndash 232},
      review={\MR{2046812}},
}

\bib{160401334}{article}{
      author={Lerner, Andrei~K.},
      author={Ombrosi, Sheldy},
      author={Rivera-R\'\i~os, Israel~P.},
       title={On pointwise and weighted estimates for commutators of
  {C}alder\'on--{Z}ygmund operators},
        date={2017},
        ISSN={0001-8708},
     journal={Adv. Math.},
      volume={319},
       pages={153\ndash 181},
         url={http://dx.doi.org/10.1016/j.aim.2017.08.022},
      review={\MR{3695871}},
}

\bib{MR1405600}{article}{
      author={Seeger, Andreas},
       title={Endpoint inequalities for {B}ochner-{R}iesz multipliers in the
  plane},
        date={1996},
        ISSN={0030-8730},
     journal={Pacific J. Math.},
      volume={174},
      number={2},
       pages={543\ndash 553},
  url={http://projecteuclid.org.prx.library.gatech.edu/euclid.pjm/1102365183},
      review={\MR{1405600}},
}

\bib{MR1232192}{book}{
      author={Stein, Elias~M.},
       title={Harmonic analysis: real-variable methods, orthogonality, and
  oscillatory integrals},
      series={Princeton Mathematical Series},
   publisher={Princeton University Press, Princeton, NJ},
        date={1993},
      volume={43},
        ISBN={0-691-03216-5},
        note={With the assistance of Timothy S. Murphy, Monographs in Harmonic
  Analysis, III},
      review={\MR{1232192}},
}

\bib{MR1666558}{article}{
      author={Tao, Terence},
       title={The {B}ochner-{R}iesz conjecture implies the restriction
  conjecture},
        date={1999},
        ISSN={0012-7094},
     journal={Duke Math. J.},
      volume={96},
      number={2},
       pages={363\ndash 375},
      review={\MR{1666558}},
}

\bib{MR1405057}{article}{
      author={Vargas, Ana~M.},
       title={Weighted weak type {$(1,1)$} bounds for rough operators},
        date={1996},
        ISSN={0024-6107},
     journal={J. London Math. Soc. (2)},
      volume={54},
      number={2},
       pages={297\ndash 310},
         url={http://dx.doi.org.prx.library.gatech.edu/10.1112/jlms/54.2.297},
      review={\MR{1405057}},
}

\end{biblist}
\end{bibdiv}

\end{document}